\documentclass[a4paper,reqno,index,11pt]{amsart}

\usepackage[normalem]{ulem} 

\usepackage[baseline]{euflag}

\DeclareMathAlphabet{\lcal}{U}{dutchcal}{m}{n}

\usepackage{amsmath, amsfonts, amsthm, amssymb}
\usepackage{mathrsfs}
\usepackage{fancyhdr}

\usepackage{enumerate}   
\usepackage{blkarray} 
\usepackage{mathrsfs}
\usepackage[title]{appendix}
\usepackage{amsthm}
\usepackage{xcolor}
\usepackage{graphicx}
\usepackage[normalem]{ulem}
\usepackage{morefloats}
\usepackage{float}
\usepackage{hyperref}
\usepackage{mathtools}

\usepackage[shortlabels]{enumitem}

\usepackage{setspace}

\usepackage[margin=2.5cm]{geometry}
\newcommand{\df}[1]{{\textit{#1}}{\index{#1}}}

 \numberwithin{equation}{section}

\allowdisplaybreaks

\makeindex

\hypersetup{
    bookmarks=true,         
    unicode=false,          
    pdftoolbar=true,        
    pdfmenubar=true,        
    pdffitwindow=false,     
    pdfstartview={FitH},    
    pdftitle={My title},    
    pdfauthor={Author},     
    pdfsubject={Subject},   
    pdfcreator={Creator},   
    pdfproducer={Producer}, 
    pdfkeywords={keyword1, key2, key3}, 
    pdfnewwindow=true,      
    colorlinks=false,       
    linkcolor=green,          
    citecolor=green,        
    filecolor=green,      
    urlcolor=green,           
    urlbordercolor={1 1 1}  
}

\newtheorem{theorem}{Theorem}[section]
\newtheorem*{theorem*}{Theorem}
\newtheorem*{conjecture*}{Conjecture}
\newtheorem{lemma}[theorem]{Lemma}

\newtheorem{proposition}[theorem]{Proposition}
\newtheorem{remark}[theorem]{Remark}
\newtheorem{corollary}[theorem]{Corollary}

\DeclareMathOperator{\range}{range}
\newcommand{\CC}{\mathbb{C}}
\newcommand{\DD}{\mathbb{D}}
\newcommand{\TT}{\mathbb{T}}

\newcommand{\ol}[1]{\overline{{#1}}}
\newcommand{\gu}{\mu}

\newcommand{\cA}{\mathcal{A}}
\newcommand{\N}{\mathbb{N}}
\newcommand{\Z}{\mathbb{Z}}
\newcommand{\R}{\mathbb{R}}
\newcommand{\C}{\mathbb{C}}

\newcommand{\htop}{\operatorname{ht}}
\newcommand{\Tt}{{\mathsf T}}

\newcommand{\Mat}[2]{\operatorname{M}_{#1}(#2)}
\newcommand{\Sym}[2]{\operatorname{Sym}_{#1}(#2)}

\begin{document}

\title[Non-negative operator-valued bivariate trigonometric polynomials]{Addendum to ``Factoring non-negative operator valued trigonometric polynomials in two variables''}

\author[\small Dritschel]{Michael Dritschel}
\address{School of Mathematics, Statistics and Physics,Herschel Building, University of Newcastle, Newcastle
upon Tyne NE1 7RU, UK}
\email{michael.dritschel@ncl.ac.uk}

\author[\small Klep]{Igor Klep}
\address{University of Ljubljana, Faculty of Mathematics and Physics, Jadranska 21, 1000 Ljubljana \& 
University of Primorska, Faculty of Mathematics, Natural Sciences and Information Technologies, Glagolja\v{s}ka 8, 6000 Koper, Slovenia.}
\email{igor.klep@fmf.uni-lj.si}

\author[\small McCullough]{Scott McCullough}
\address{Department of Mathematics, University of Florida, Gainesville, FL} 
\email{sam@ufl.edu} 

\author[\small Vol\v ci\v c]{Jurij Vol\v ci\v c}
\address{The University of Auckland, Department of Mathematics,
Private Bag 92019,
Auckland 1142,
New Zealand.}
\email{jurij.volcic@auckland.ac.nz}

\subjclass[2020]{47A68, 14P10, 13C10 (Primary); 47B35, 13J30}
\keywords{Fejér-Riesz factorization,  matrix-valued polynomials,  real algebraic surfaces,  sum of squares,  Positivstellensatz,  bivariate trigonometric polynomials,  projective modules over regular rings}

\date{}

\begin{abstract}
    Factorization for positive semidefinite matrix-valued  polynomials over a nonsingular compact affine real surface is established. 
   Corollaries include Fej{\'e}r–Riesz factorization for bivariate matrix polynomials that take positive semidefinite values and resolutions
   to questions posed by Mehta-Slofstra-Zhao and Savchuk-Schm\"udgen.  An explicit example shows a conclusion of 
   \cite[Theorem, p.~519]{Dritschel2025} that arises organically from its proof need not hold. The difficulty is traced to \cite[Theorem~3.7]{Dritschel2025}.
\end{abstract}

\maketitle

\section{Introduction}
  The main result of this note, Theorem~\ref{thm:main},  is a factorization for positive semidefinite matrix-valued  polynomials over a nonsingular compact affine surface over $\R.$  A principal corollary is Fej{\'e}r–Riesz factorization for bivariate matrix polynomials that take positive semidefinite values.
  
 \begin{theorem}
  \label{thm:main:intro}
   Given a positive integers  $d$  and $n$  and $n\times n$ complex matrices $A_{j,k}$ for  $-d\le j,k\le d,$ if the the $n\times n$ matrix-valued Laurent polynomial
  \[
   A(z,w)=\sum_{j,k=-d}^d  A_{j,k} z^j w^k 
  \]
  satisfies, $A(z,w)\succeq0$ for all $|z|=1=|w|,$ then there exist positive integers $e$ and $m$ and $m\times n$ matrices $B_{j,k}$ for $0\le j,k\le e$ such that $A(z,w)=B(z,w)^* B(z,w)$ for all $|z|=1=|w|,$ where $B$ is the analytic matrix-valued polynomial
 \[
  B(z,w) = \sum_{j,k=0}^e B_{j,k} z^j w^k.
 \]
 \end{theorem}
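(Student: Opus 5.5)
We deduce Theorem~\ref{thm:main:intro} from the surface factorization (Theorem~\ref{thm:main}) applied to the torus $\TT^2$, viewed as the nonsingular compact affine real surface
\[
 V=\{(x_1,y_1,x_2,y_2)\in\R^4:\ x_1^2+y_1^2=1,\ x_2^2+y_2^2=1\}.
\]
The polynomial $A$ is then translated into a matrix-valued element of the coordinate ring $\R[V]$, and the resulting sum of squares is translated back into a single analytic factor.

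\textbf{Step 1: realification.} Write $z=x_1+iy_1$ and $w=x_2+iy_2$. On $V$ we have $\bar z=z^{-1}$ and $\bar w=w^{-1}$, so $A$ becomes a Hermitian matrix with entries in $\C[V]=\R[V]\otimes\C$. To stay over $\R$, I replace $A=P+iQ$ by the real symmetric $2n\times 2n$ matrix
\[
 \widehat A=\begin{pmatrix}P&-Q\\ Q&P\end{pmatrix}.
\]
Here $P=P^{\Tt}$, $Q=-Q^{\Tt}$, and the entries of $P,Q$ lie in $\R[V]$. The matrix $\widehat A$ is positive semidefinite on $V$ exactly when $A$ is. The torus is smooth and compact, so Theorem~\ref{thm:main} applies and gives $\widehat A=\sum_\ell G_\ell^{\Tt}G_\ell$, or equivalently $\widehat A=G^{\Tt}G$ after stacking the $G_\ell$, with $G$ having entries in $\R[V]$.

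\textbf{Step 2: back to complex form.} Compress by the isometry $v\mapsto\frac{1}{\sqrt2}(v,-iv)$, i.e.\ use $J=\frac1{\sqrt2}\begin{pmatrix}I\\-iI\end{pmatrix}$. A direct computation gives $J^*\widehat AJ=A$, so
\[
 A=(GJ)^*(GJ)=C^*C,
\]
where $C$ has entries in $\C[V]$. Any element of $\C[V]$ is a Laurent polynomial in $z,w$. Choose $e_0$ so that $z^{e_0}w^{e_0}C$ is an analytic polynomial, and set $B=z^{e_0}w^{e_0}C$. Since $|z^{e_0}w^{e_0}|=1$ on $\TT^2$, we get $B^*B=C^*C=A$ there. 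The entries of $B$ are polynomials in $z,w$ with matrix coefficients $B_{j,k}$ for $0\le j,k\le e$, where $m$ is the number of rows of $G$, as required.

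\textbf{Main obstacle.} Steps 1 and 2 are routine. All the difficulty lies in having Theorem~\ref{thm:main} available for \emph{semidefinite}, not strictly positive, matrices. The step that needs the most care is checking its hypotheses for $V$: the torus is nonsingular and compact, and it is affine with coordinate ring $\R[x_1,y_1,x_2,y_2]/(x_1^2+y_1^2-1,\,x_2^2+y_2^2-1)$. If Theorem~\ref{thm:main} is instead stated for Hermitian matrices over $\C[V]$ with the involution induced by conjugation, the realification in Step 1 is unnecessary, and $C$ comes directly from the theorem.
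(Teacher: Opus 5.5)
Your proposal is correct and follows the paper's route: Theorem~\ref{thm:main:intro} is the torus case of Theorem~\ref{thm:main} (Corollary~\ref{c:mtx2FR}), and negative powers are then cleared by multiplying by the unimodular monomial $z^{e_0}w^{e_0}$. Your realification in Steps 1--2 only repeats the reduction in the Remark right after Theorem~\ref{thm:main}, which is already stated for Hermitian matrices over $\C\otimes_\R R$; also, when checking hypotheses you should note that the torus is irreducible, which holds because it is a product of two irreducible conics.
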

 
 The article contains two further corollaries to Theorem~\ref{thm:main}. Corollary~\ref{c:msz}  answers an  open problem on positivity in tensor products from \cite{Mehta2026} that arose in quantum information theory.  Corollary~\ref{c:mtxsphere} asserts that $\Mat{n}{\C[S^2]}$ admits a perfect Positivstellensatz, answering 
 a question from \cite[Section 11]{Savchuk2012}. 
 
 Theorem~\ref{thm:main:intro} is mute on degree bounds.  It is known that, even in the scalar case,  for a given $d,$ no bound on $e$ in Theorem~\ref{thm:main:intro} exists \cite[Corollary 5.5]{Scheiderer2005}. In particular, Theorem~\ref{thm:main:intro} can not hold in general with operator coefficients $A_{j,k}.$  While no bound on $e$ exists, it is still possible certain partial degree bounds can hold. 
However,  it is shown via an explicit example that the degree bound  in the statement of bivariate matrix-valued Fej{\'e}r–Riesz factorization \cite[Theorem~4.1]{Dritschel2025} that  arises organically from its proof need not hold.   As a related example shows, the difficulty in the proof of \cite[Theorem~4.1]{Dritschel2025} lies in \cite[Theorem~3.7]{Dritschel2025}.  These examples appear in Section~\ref{s:examples}.   The main result is developed in Section~\ref{s:main-result}. The three corollaries are found in Section~\ref{s:cors}. 
While the examples above by no means rule out an operator-theoretic proof of Theorem \ref{thm:main:intro} in the vein of \cite{Dritschel2025}, in the interest of correcting the record in a timely fashion, the proof given here relies on results and methods of commutative algebra and real algebraic geometry, which also generalizes to other nonsingular compact surfaces in Theorem~\ref{thm:main}.

 \subsection*{Use of AI}
The authors acknowledge the use of GPT-5.6 Sol  and GPT-5.5 during the preparation of this note. 
Following the authors' identification of a flaw in the original proof, the model was utilized to explore and construct explicit counterexamples to intermediate statements and to suggest candidate arguments for the revised proof of Theorem \ref{thm:main}. 
All counterexamples, proofs, and mathematical details were independently checked, verified, and formalized by the authors, who assume full responsibility for their mathematical correctness.

\subsection*{Acknowledgments} 
{\small 
This work was performed within the project COMPUTE, which is funded within the QuantERA II Programme that has received funding from the EU's H2020 research and innovation programme under the GA No.~101017733 {\normalsize\euflag}. 
IK also acknowledges support of the Slovenian Research Agency program P1-0222 and grants J1-50002, N1-0217, J1-60011, J1-50001, J1-3004 and J1-60025.
JV was supported by the Marsden Fund MFP-UOA2528 from Government funding,administered by the Royal Society Te Ap\=arangi, and the New Staff grant 3734441 by the Faculty of Science, the University of Auckland.
}

\subsection*{Thanks}
The authors thank Greg Blekherman, Abhay Jindal, Michael Jury, James Pascoe, Daniel Plaumann, Rainer Sinn and Aljaž Zalar for discussions.  The second and third authors also thank Jacob Levenson for discussions of and related to the many fruitful ideas in \cite{Dritschel2025} that led to the article \cite{KLM} and a follow-up currently in preparation.

\section{Counterexamples} 
\label{s:examples}

This section develops counterexamples to the results in \cite{Dritschel2025}. In the first subsection we present an example showing that the conclusion of \cite[Theorem 3.7]{Dritschel2025} need not hold. The second subsection then develops a counterexample to the degree bound of the main result, \cite[Theorem 4.1]{Dritschel2025}.

\subsection{Theorem 3.7}
    There is  an explicit verifiable counterexample to \cite[Theorem~3.7]{Dritschel2025} even for the case where all the coefficient Hilbert spaces are the complex numbers (one dimensional).  Accordingly we restrict out attention to the case $\mathfrak{K}=\mathfrak{H}=\C.$ To describe this setting, we use the standard notation $L^\infty=L^\infty(\TT)$ for the space of bounded measurable complex-valued functions on the unit circle $\TT$  with respect to Lebesgue measure.  Likewise, $L^2=L^2(\TT)$    and $H^2=H^2(\TT)$ denote the usual Hilbert space and Hardy-Hilbert space, respectively, of $L^2$ functions on the unit circle.  In particular, $H^2$ consists of those $L^2$ functions with vanishing negative Fourier coefficients; equivalently, those $L^2$ functions that are the boundary values of functions analytic in the unit disk $\DD.$   An element $\varphi \in L^\infty$ induces an operator (multiplication operator) $M_\varphi:L^2\to L^2$ by $M_\varphi f=\varphi f,$ known as a bilateral Toeplitz operator.   Letting $P_+:L^2\to H^2$ denote the orthogonal projection, the operator $T_\varphi:H^2\to H^2$ defined by
   \[
     T_\varphi  f= P_+ \varphi f,
   \]
 is the \df{unilateral Toeplitz operator with symbol $\varphi$}.   \index{$T_\varphi$}
 The operator $M_\varphi$ is the bilateral version of $T_\varphi.$ 
 
 Turning to the concrete example, fix $0<r<1$ and let $a=2(1+r)$ and $b=1+rz$ and set $A=T_a$ and $B=T_b.$ In the notation of \cite{Dritschel2025}, the Toeplitz operator $B$ has degree $(1,0)$ since $b,$ written as $b_+ + b_-$, where $b_-$ is a conjugate analytic polynomial and $b$ is an analytic polynomial, the degrees of $b_+$ and $b_-$ are $1$ and $0$,  respectively.   Let $M=\frac12 A,$ and consider the  block $2\times 2$ matrix,
 \[
  \begin{pmatrix} A-M & B \\ B^* & M  \end{pmatrix} \, : \, \begin{matrix} H^2 \\ \oplus \\ H^2\end{matrix} \mapsto \begin{matrix} H^2 \\ \oplus \\ H^2\end{matrix}.
    \]
    It is positive semidefinite (denoted ${}\succeq0$), since pointwise for $|z|=1,$ 
\[  
     \begin{pmatrix}  (1+r) & 1+rz \\ 1+ r\ol{z} & (1+r) \end{pmatrix} \succeq0.
 \]
 Thus the pair $(A,B)$ satisfies the hypotheses of \cite[Theorem~3.7]{Dritschel2025}.  
 
 Let $m=|b|$ and observe $0\preceq 2T_m\preceq T_a$ since $a \ge 2m\ge 0$ pointwise.  On the other hand, if $T=T_f$ is a Toeplitz operator and 
 there a $g$ such that $0\le T_g \le T_a,$ and
 \[
    \begin{pmatrix}  T_f - T_g & T_b \\ T_b^* & T_g  \end{pmatrix} \succeq 0,
 \]
 then $f\ge g\ge 0$ and at the same time $m^2=|b|^2 \le (f-g) g.$ Thus $f \ge 2m.$  Equivalently, $T_f \ge 2T_m.$
 It follows that, in the statement of \cite[Theorem~3.7]{Dritschel2025}, 
 $M\in \mathscr{M}\ne \emptyset$  and
  $\widehat{A} = 2T_m.$  Moreover, with this choice
 of $f=2m,$ the only choice for $g$ is  $m.$ Hence, in the statement of \cite[Theorem~3.7]{Dritschel2025}, $\widehat{M}=T_m$
 so that $\widehat{A}-\widehat{M}=\widehat{M} = T_m.$
 To see that the claimed existence of Toeplitz operators $R_E=V_E E$ and $R_F=V_F F$ (the exact description of $E,F$ and $V_E, V_F$
 does not play a role) satisfying
   \begin{equation}
   \label{e:RFstarRE}
      \widehat{M} =  R_F^* R_F = T_m = R_E^* R_E = \widehat{A}-\widehat{M}, \  \  \  B= R_F^* R_E =T_b 
   \end{equation}
    leads to a contradiction,  let  $u$ and $v$ denote the symbols of $R_E$ and $R_F$, respectively. 
    In brief, from equation~\eqref{e:RFstarRE} it follows that $u^*u=v^*v=m$ and $|u^*v|= |b|=m$. Thus $u$ and $v$ are (pointwise a.e.) collinear,  which leads to a contradiction (see the details below).

     To pinpoint the difficulty in the proof of \cite[Theorem 3.7]{Dritschel2025} and using the notation therein,  where $\mathfrak{E}=\mathfrak{F}=\CC$ and $S_E=S_F=S$ are the unilateral shifts, the coefficients of  $g_n=(I-S^n S^{*n}) G S^n h_{1,n}$ beyond $n$ are $0;$ and not that the first $n$ coefficients of $g_n$ are $0,$ as claimed.  Consequently, the claim that $(g_n)_n$ converges to $0,$ and hence $f_2=0,$  is not justified.
 This claim is then  used to show $S_W$ is a shift, where  $W$ and $S_W$ are constructed during the proof.  We next describe this
 construction and show, in the present example, 
  $S_W$ is not a shift, using the fact $S_W$ is a shift if and only if 
 \begin{equation}
 \label{e:SW-no-shift}
  \cap_{n=0}^\infty \widetilde{S}^n \ol{\range{W}} = \{0\},
 \end{equation}
  where $\widetilde{S}=M_z$ is the bilateral shift of multiplication by $z$ on $L^2.$

The two positive Toeplitz operators $R_E^*R_E$ and $R_F^*R_F$  are the same with symbol $m.$  The polynomial $b$ has no zeros in a neighborhood of the closed unit disk and hence admits an (analytic) square root  $q.$ Thus,
\[
|q|^2=m=|b|
\]
Choose
\(
E=F=T_q.
\)
Let $G=V_F^*V_E$. Then $G=T_u$ is a unilateral Toeplitz operator satisfying
\[
B=F^*GE
\]
as in \cite[Theorem 3.7]{Dritschel2025}.
Therefore
\[
T_b=T_q^*\,G\,T_q  = T_q^* T_\gu T_q = T_{\ol{q} \gu q}.
\]
 Thus, on the boundary of the unit disk, 
\[
b=\overline q\, \gu\, q = \gu |q|^2 = \gu m.
\]
Since \(m=|b|\), 
\[
\gu=\frac{b}{m}=\frac{b}{|b|} 
\]
a.e. on the boundary of the disk. Since \(|\gu|=1\) a.e., the bi-infinite multiplication operator \(\widetilde G=M_\gu\) is unitary, as claimed in the proof of \cite[Theorem 3.7]{Dritschel2025}. 

Express $L^2 = H^2_{-}\oplus H^2,$ where $H^2_{-}$ is the orthogonal complement of $H^2$ in $L^2.$ 
The Hankel operator $H_\gu$ with symbol $u$ is the operator $H_\gu:H^2\to H^2_{-}$ defined by
\[
 H_u f= P_{-} uf, 
\]
 where $P_{-}$ is the orthogonal projection of $L^2$ onto $H^2_{-}.$ 
The operator $W:L^2\to H^2\oplus H^2$ is given in block form as
\[
 \begin{pmatrix} D & 0 \\G & I \end{pmatrix} = \begin{pmatrix} H_\gu & 0 \\ T_u & I \end{pmatrix} : \begin{matrix} H^2 \\ \oplus \\ H^2 \end{matrix} \mapsto \begin{matrix} H^2_{-} \\ \oplus \\  H^2 \end{matrix}.
\]

The operator $S_W$ is the Lowdenslager isometry associated to $W.$ What is important here is that $S_W$ is unitary, and not a shift, if the range of $W$ is dense. 
Indeed, in this case the intersection in equation~\eqref{e:SW-no-shift} is all of $L^2.$  
To prove that $\range W$ is dense, it suffices to show 
 \(D H^2\) is dense in \(H^2_-\). Equivalently,  it suffices to show if $h\in H^2$ and $uh\in H^2,$ then $h=0.$  Suppose $h\ne 0.$ Since \(|\gu|=1\), \(h\) and \(\gu h\) would have the same boundary modulus, their outer factors are the same.  Letting $\varphi$ and $\psi$ denote the inner factors of $\gu h$ and $h$ respectively,
 \[
  \gu\psi = \varphi
 \]
  a.e. on the boundary of the disk.\footnote{So $u$ is a function of bounded type.} Hence the same is true of
  \[
    \gu^2 \psi^2 =\varphi^2.
  \]
  Since 
  \[
    \gu^2 = \frac{\ b\ }{\ \ol{b} \ }=  \frac{1+rz}{r+z}
  \]
   on the boundary of the disk, 
\[
   (1+rz) \psi^2 = (r+z) \varphi^2
\]
  on the boundary of the disk, and since both $(1+rz)\psi^2$ and $(r+z)\varphi^2$ are analytic in the disk, the are equal in the disk. But the right hand side has a zero of odd order at $-r;$ whereas the left hand side has a zero even order at $-r,$ a contradiction that shows $h=0$ and completes the proof that $S_W$ is not a shift.  It is essentially this same argument that shows colinearity  of $u$ and $v$ together with $|u|=|v|$,  which gives $g = \frac{u}{v}$ 
  is unimodular, that produces the contradiction above.

\subsection{A counterexample to degree bounds}

This section presents a example where the degree bound $2d_1-1$ in \cite[Theorem~4.1]{Dritschel2025} does not hold.
 Fix $0<\rho<1$ and let $p$ and $q$ denote the analytic polynomials in the complex variable $w,$
\[
p(w) = w - \rho, \qquad q(w) = 1 - \rho w. 
\]
Thus, 
\[
 \theta(w) = \frac{p(w)}{q(w)}
\]
 is the Blaschke factor with zero at $\rho$ and so is unimodular on the boundary $\TT$ 
 of the unit disk $\DD$ in the complex plane $\CC.$ Let
 \[
 h(w) \coloneqq q(w)^* q(w) = (1 - \rho w^{-1})(1 - \rho w) = \frac{p(w)q(w)}{w},
 \]
  and note, for $|w|=1,$
 \[
  h(w) = 1+\rho^2 -\rho w-\rho {w^{-1}} \in \CC[w,w^{-1}].
 \]
 Generally we identify hermitian squares of analytic polynomials as elements of $\CC[w,w^{-1}]$, where
 $w\in \TT$ is implicit.
 
 Let 
\begin{equation}
\label{e:P}
P(z,w) = h(w) I_4 + z B(w) + {z^{-1}}  B(w)^*,
\end{equation}
where, 
\begin{equation*}
B(w) = \frac{1}{\sqrt{2}w}
\begin{pmatrix}
0 & 0 & 0 & p^2 \\
q^2 & 0 & p^2 & 0 \\
0 & 0 & 0 & -q^2 \\
0 & 0 & 0 & 0
\end{pmatrix}.
\end{equation*}
Thus $\deg_z P = \deg_w P=1$, and $B$ is a matrix-valued   polynomial in $w$ of degree $1$.

The next proposition provides the counterexample to the degree bound in \cite[Theorem 4.1]{Dritschel2025}.

\begin{proposition}
 \label{prop:main-cex}
   With the  above notation,  $P=G^*G$  where $G$ is a $2\times 4$ matrix-valued analytic polynomial with $\deg_z G=2,$
   \begin{equation}
G(z,w) = 
\begin{pmatrix}
\dfrac{qz}{\sqrt{2}} & \dfrac{p+q}{2} & \dfrac{pz}{\sqrt{2}} & \dfrac{p-q}{2}z^2 \\[3mm]
\dfrac{qz}{\sqrt{2}} & \dfrac{p-q}{2} & -\dfrac{pz}{\sqrt{2}} & \dfrac{p+q}{2}z^2
\end{pmatrix}.
\end{equation}
 In particular $P(z,w)\succeq0$
for all $|z|=1=|w|.$ 

Further, if $n$ is a positive integer and $F$ is a $n\times 4$ matrix-valued analytic polynomial such that 
$P=F^*F,$ then $\deg_z F\ge 2.$
\end{proposition}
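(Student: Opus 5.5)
The first assertion is a direct computation, and I would dispose of it first. I would write $G=G_0+zG_1+z^2G_2$ with $G_j$ depending only on $w$, and expand $G^*G$ in powers of $z$. The coefficient of $z^0$ is $G_0^*G_0+G_1^*G_1+G_2^*G_2$, the coefficient of $z$ is $G_0^*G_1+G_1^*G_2$, and the coefficient of $z^2$ is $G_0^*G_2$. On $\TT$ one has $p^*p=q^*q=h$, and $p^*q=wh\,/\,(pq/w)\cdot\ldots$; more usefully, $\overline{q}\,p$ and $\overline{p}\,q$ have the explicit forms $\theta h$ and $\overline{\theta}h$. Using these, the check is that:
\begin{itemize}
\item the $z^0$ coefficient equals $hI_4$;
\item the $z^1$ coefficient equals $B$; the factors $1/(\sqrt2 w)$ come from $\overline{q}=w^{-1}(w-\rho)\cdot(\ldots)$, i.e.\ from $\overline{q}(w)=w^{-1}p(w)\cdot(-1)\cdots$ up to the bookkeeping $\overline{q}q=h=pq/w$;
\item the $z^2$ coefficient $G_0^*G_2$ vanishes, because the $\tfrac{p\pm q}{2}$ entries in the two rows cancel.
\end{itemize}
Positivity of $P$ on $\TT^2$ then follows from $P=G^*G$.

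For the lower bound I argue by contradiction. Suppose $F=F_0+zF_1$ is an $n\times4$ analytic polynomial with $P=F^*F$ (the case $\deg_zF=0$ is absurd since $B\neq0$). Comparing $z$-coefficients on $\TT^2$ gives
\[
F_0^*F_0+F_1^*F_1=hI_4,\qquad F_0^*F_1=B .
\]
Let $f_j,g_j\in\CC^n$ denote the $j$-th columns of $F_0,F_1$; these are analytic polynomials in $w$. The relations read:
\begin{itemize}
\item $|f_j|^2+|g_j|^2=h$ for each $j$;
\item $\langle f_k,f_j\rangle+\langle g_k,g_j\rangle=0$ for $j\ne k$;
\item $\langle g_k,f_j\rangle=B_{jk}$.
\end{itemize}
Since $|p|=|q|$ on $\TT$ and $|q|^2=h$, every nonzero entry of $B$ has modulus $h/\sqrt2$ on $\TT$.

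The core step is a pointwise extremality argument on $\TT$. Consider the nonzero entries $B_{14},B_{34},B_{21},B_{23}$. By Cauchy--Schwarz,
\[
|g_4|^2|f_1|^2\ge h^2/2,\quad |g_4|^2|f_3|^2\ge h^2/2,\quad |g_1|^2|f_2|^2\ge h^2/2,\quad |g_3|^2|f_2|^2\ge h^2/2 .
\]
I would combine these with $|f_j|^2+|g_j|^2=h$ and the orthogonality relations. In particular, the zero entries of $B$ give $\langle g_4,f_2\rangle=\langle g_4,f_4\rangle=0$ and $\langle g_1,f_1\rangle=\langle g_3,f_3\rangle=0$ etc. The aim is to show all these inequalities are equalities a.e. on $\TT$, and that the relevant pairs are collinear: $g_4\parallel f_1$, $g_4\parallel f_3$, $g_1\parallel f_2\parallel g_3$. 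A Bessel-type inequality for $g_4$ against the span of $f_1,f_3$ should be the tool, after first showing $f_1\perp f_3$ from $\langle f_1,f_3\rangle=-\langle g_1,g_3\rangle$ together with the structure on the $f_2$ side.

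Once collinearity with the prescribed moduli holds (e.g.\ $|f_1|^2=|f_3|^2=|g_4|^2=h/2$), I read off phases. Write $f_1=\alpha g_4$ and $f_3=\beta g_4$ with $|\alpha|=|\beta|=1$. The equations $\langle g_4,f_1\rangle=\overline{B_{14}}$ and $\langle g_4,f_3\rangle=\overline{B_{34}}$ force $\overline{\beta}\alpha$, or its square, to equal a fixed multiple of $p^2/q^2=\theta^2$ or of $\theta$. Then I mimic the end of the Theorem~3.7 counterexample. Passing to inner–outer factorizations of the analytic vector polynomials involved, the unimodular ratio must be a quotient of inner functions. A relation of the form $(r+z)\varphi^2=(1+rz)\psi^2$, here with the Blaschke factor $\theta$ at $\rho$, has a zero of odd order on one side and even order on the other, which is a contradiction.

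The main obstacle is the extremality step. One must choose the right combination of the four Cauchy--Schwarz inequalities and the orthogonality constraints so that equality, rather than mere inequality, is forced. I would first try summing the four inequalities, using $a_j=|f_j|^2/h\in[0,1]$. If that is not tight, I would exploit that the trace of $F_0^*F_0$ is the squared norm of $F_0$ and set up a small finite-dimensional optimisation at a fixed $w\in\TT$.
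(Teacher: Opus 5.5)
\textbf{The gap is in your endgame.} You plan to get the final contradiction from an odd-versus-even zero order at $\rho$ after inner--outer factorization, as in the Theorem~3.7 example. No argument of that kind can work here, because the constraints $F_0^*F_0+F_1^*F_1=hI_4$, $F_0^*F_1=B$ are solvable with coefficients analytic on a neighbourhood of $\overline{\DD}$. For example,
\[
F_0=\begin{pmatrix}0&p&0&0\\ \frac{q}{\sqrt2}&0&-\frac{p^2}{\sqrt2\,q}&0\end{pmatrix},\qquad
F_1=\begin{pmatrix}\frac{q}{\sqrt2}&0&\frac{p^2}{\sqrt2\,q}&0\\ 0&0&0&p\end{pmatrix}
\]
satisfy $(F_0+zF_1)^*(F_0+zF_1)=P$ on $\TT^2$, and the only pole is at $1/\rho$. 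Consistent with this, the ratios your collinearity produces are
\[
g_4=\sqrt2\,\theta f_1,\quad f_2=\sqrt2\,\theta g_1,\quad f_3=-\theta^2f_1,\quad g_3=\theta^2g_1 .
\]
Each is either $\theta$ in the harmless direction or an even power of $\theta$, so there is no parity mismatch. The real obstruction is purely polynomial: the entries of $F_1^*F_1$ must lie in $\C[w,w^{-1}]$. This is what the paper uses. It conjugates by $S=\operatorname{diag}(q/\theta,q,p,q)$ and applies Lemma~\ref{l:lem:M}, which forces $F_1^*F_1=S^*M_0S$. The $(1,3)$ entry of that matrix is $p^3/(2wq)$, which is not a Laurent polynomial. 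Separately, there is a sign slip in your first part: on $\TT$ one has $\overline q=p/w$ and $\overline p=q/w$.

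\textbf{The step you call the main obstacle is immediate, and several of your guesses about it are wrong.} Put $a_j=|f_j|^2/h\in[0,1]$. The inequalities $(1-a_4)a_1\ge\frac12$ and $(1-a_1)a_2\ge\frac12$ force $a_1=\frac12$, hence $a_4=0$ and $a_2=1$; in the same way $a_3=\frac12$. Consequently:
\begin{itemize}
\item $f_4=g_2=0$ and $|g_4|^2=|f_2|^2=h$, not $h/2$;
\item all four Cauchy--Schwarz inequalities are equalities;
\item $f_1$ and $f_3$ are both parallel to $g_4$, so they are not orthogonal, and the proportionality factors $\overline\theta/\sqrt2$ and $-\theta/\sqrt2$ have modulus $1/\sqrt2$, not $1$;
\item $g_1=\frac{\overline\theta}{\sqrt2}f_2$ and $g_3=\frac{\theta}{\sqrt2}f_2$.
\end{itemize}
Now replace your endgame with the Laurent-polynomial check:
\[
g_1^*g_3=\tfrac12\,\theta^2h=\frac{p^3}{2wq}\notin\C[w,w^{-1}],
\]
since $p(1/\rho)\ne0=q(1/\rho)$. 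On the other hand $g_1^*g_3$ must be a Laurent polynomial, because $g_1$ and $g_3$ are polynomial. With this repair your route becomes a correct and more elementary alternative to the paper's. Entrywise Cauchy--Schwarz against the diagonal constraint replaces the kernel-vector argument of Lemma~\ref{l:lem:M}, and the contradiction is the same non-Laurent entry $M_{1,3}$.
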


Before turning to the proof proper we introduce some notation and record a lemma.  Let
\[
 B_0 = \frac{1}{\sqrt{2}} \begin{pmatrix} 0&0&0&1\\1&0&1&0\\0&0&0&-1\\0&0&0&0\end{pmatrix}, \ \ \
 M_0=\frac12 \begin{pmatrix}  1&0&1&0\\0&0&0&0\\1&0&1&0\\0&0&0&1\end{pmatrix}.
\]
In terms of the orthonormal basis 
\[
u_1 = e_2, \qquad v_1 = \frac{e_1 + e_3}{\sqrt{2}}, \qquad u_2 = \frac{e_1 - e_3}{\sqrt{2}}, \qquad v_2 = e_4
\]
of $\mathbb{C}^4,$ 
\[
B_0 = u_1 v_1^* + u_2 v_2^*, \qquad M_0 = v_1 v_1^* + v_2 v_2^*
\]
and 
\begin{equation}
\label{e:B0}
B_0^* B_0 = M_0, \qquad B_0 B_0^* = I - M_0, \qquad B_0^2 = 0, \qquad B_0^* u_j= v_j.
\end{equation}

The following lemma exposes the key uniqueness feature of this example.

\begin{lemma}
\label{l:lem:M}
If $M$ is a $4\times 4$ matrix and 
\begin{equation}
\label{e:lem:M}
J = \begin{pmatrix}
I - M & B_0 \\
B_0^* & M
\end{pmatrix} \succeq 0,
\end{equation}
then 
\(
M = M_0.
\)
\end{lemma}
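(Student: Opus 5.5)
The plan is to test the quadratic form of $J$ on well-chosen vectors adapted to the orthonormal basis $u_1,v_1,u_2,v_2$. Then use the operator inequalities $0\preceq M\preceq I$, which $J\succeq0$ forces, to pin down $M$ on each basis vector.

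First, $J\succeq 0$ means in particular that $J$ is self-adjoint, so $M=M^*$. Its diagonal blocks are positive semidefinite: $M\succeq 0$ and $I-M\succeq 0$. Hence $0\le \langle Mx,x\rangle\le \|x\|^2$ for all $x\in\C^4$.

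Next, fix $j\in\{1,2\}$ and scalars $a,b\in\C$, and evaluate $\langle J\xi,\xi\rangle$ at $\xi=(a u_j,\, b v_j)$. By \eqref{e:B0}, $B_0 v_j=u_j$, since $B_0=u_1v_1^*+u_2v_2^*$. This gives
\[
0\le |a|^2\bigl(1-\langle Mu_j,u_j\rangle\bigr)+2\,\mathrm{Re}\,(\ol{a}\,b)+|b|^2\langle Mv_j,v_j\rangle .
\]
This is a positive semidefinite $2\times 2$ hermitian form in $(a,b)$ with off-diagonal entry $1$. Its determinant must therefore be nonnegative:
\[
\bigl(1-\langle Mu_j,u_j\rangle\bigr)\,\langle Mv_j,v_j\rangle\ \ge\ 1 .
\]
Both factors lie in $[0,1]$ by the first step. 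So equality holds throughout, giving $\langle Mu_j,u_j\rangle=0$ and $\langle Mv_j,v_j\rangle=1$.

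Finally, convert these scalar equalities into vector identities. Since $M\succeq 0$ and $\langle Mu_j,u_j\rangle=\|M^{1/2}u_j\|^2=0$, we get $Mu_j=0$. Since $I-M\succeq0$ and $\langle (I-M)v_j,v_j\rangle=0$, we get $(I-M)v_j=0$, that is, $Mv_j=v_j$. Thus $M$ annihilates $u_1,u_2$ and fixes $v_1,v_2$. Because these four vectors form an orthonormal basis, $M=v_1v_1^*+v_2v_2^*=M_0$.

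There is no real obstacle here. The only point needing care is the observation that both factors in the determinant inequality are bounded by $1$, which is exactly where the two diagonal positivity conditions $M\succeq0$ and $I-M\succeq0$ are used together.
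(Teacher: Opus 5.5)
Your proof is correct, but it follows a different route from the paper's.

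The paper never uses the diagonal bounds $0\preceq M\preceq I$. It sets $x_j(z)=-zu_j+v_j$ and checks that $(I+zB_0+\ol{z}B_0^*)x_j(z)=0$ for $|z|=1$. It then observes that $\gamma_j(z)=(x_j(z),\,zx_j(z))$ is isotropic for $J$ \emph{whatever} $M$ is, because the two $M$-terms cancel in $\langle J\gamma_j,\gamma_j\rangle$. Positivity forces $J\gamma_j=0$. The second block row $(B_0^*+zM)x_j(z)=0$ then gives $Mx_j(z)=v_j$ for every unimodular $z$, so $Mu_j=0$ and $Mv_j=v_j$ come out together.

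You instead compress $J$ to the span of $(u_j,0)$ and $(0,v_j)$ and squeeze the determinant inequality against the diagonal-block bounds. This can be shortened further: the single vector $\xi=(u_j,-v_j)$ already gives $0\le\langle J\xi,\xi\rangle=-\langle Mu_j,u_j\rangle-\langle (I-M)v_j,v_j\rangle\le 0$.

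Your argument is more elementary and uses only that $B_0$ is a partial isometry sending $v_j$ to $u_j$. The paper's argument exposes the structural source of the rigidity used in Proposition~\ref{prop:main-cex}. The normalized symbol $I+zB_0+\ol{z}B_0^*$ has, on the circle, a kernel vector of degree one in $z$, and every factorization $F_0+zF_1$ of that symbol must annihilate it.

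Both arguments produce $M=v_1v_1^*+v_2v_2^*=B_0^*B_0$, as in \eqref{e:B0}, whose $(4,4)$ entry is $1$. The displayed matrix for $M_0$ has $\tfrac12$ in that position, which is a typo.
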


\begin{proof}
For $j=1,2$, put
\[
x_j(z)=  -z u_j + v_j 
\]
and compute   $(zB_0 + {z^{-1}} B_0^*)x_j(z)=-x_j(z)$   for $|z|=1$, so that 
\[
 ( I + zB_0 +{z^{-1}}B_0^* ) x_j(z) =0.
\]
Hence 
\[
\langle  J \gamma_j,\, \gamma_j \rangle 
 =  \langle  ( I + zB_0 + {z^{-1}}B_0^* )x_j ,x_j \rangle  =0,
\]
where
\[
 \gamma_j(z) = \begin{pmatrix} x_j(z) \\z x_j(z) \end{pmatrix}.
\]
 Since the matrix $J$ (of equation~\eqref{e:lem:M}) is positive semidefinite and $J\gamma_j=0,$ it follows that, for $|z|=1,$ 
\[
  (B_0^* + z M)x_j(z) =0. 
\]
On the other hand, from equation~\eqref{e:B0},
\[
 B_0^* x_j = -z v_j
\]
 and thus,  $Mx_j(z)= v_j;$ that is,
 \[
  Mv_j = v_j = M_0 v_j , \qquad M u_j=0 = M_0 u_j. 
 \]
  Because $\{u_1,u_2,v_1,v_2\}$ is a basis of $\CC^4,$  it follows
  that $M=M_0.$
 \end{proof}

\begin{proof}[Proof of Proposition~\ref{prop:main-cex}]
 Let 
\[
 S(w) = \begin{pmatrix} \frac{q}{\theta} &0&0&0 \\ 0 & q &0&0\\0&0&p&0\\0&0&0&q\end{pmatrix}.
\]
   Using the relations
    \begin{equation}
    	   \label{e:pandq}
  		p = w \ol{q},\quad
  		h =\ol{q} q = \frac{pq}{w},\quad
  		\ol{q}p = h \theta  = \frac{p^2}{w},\quad
  		q\ol{p}  =  h\theta^{-1}  = \frac{q^2}{w},  
  \end{equation}
  valid for $|w|=1,$ direct computation shows, for $|z|=1=|w|,$ that  $F^*F=P.$ Further, the function $S$ satisfies $S(w)^*S(w)=h I_4$ and 
\begin{equation}
\label{e:SB0S}
 S(w)^* B_0 S(w) = \frac{1}{\sqrt{2}w} \,  \begin{pmatrix} 0&0&0&h\theta \\ \frac{h}{\theta} & 0& \frac{h}{\theta} & 0 \\ 0&0&0 & h\theta 
 \\ 0&0&0&0  \end{pmatrix} = B(w).
\end{equation}
 
  Arguing by contradiction,  suppose there is an $n$ and an $n\times 4$ matrix-valued analytic trigonometric polynomial
 \[
  F(z,w) = F_0(w) + zF_1(w)
 \]
  such that $P=F^*F.$   Comparing coefficients in equation~\eqref{e:P} to those of $F^*F$ gives,
\[
F_0^* F_0 = hI - M, \qquad F_0^* F_1 = B,
\]
where $M= F_1^* F_1.$ In particular,
\begin{equation}
\label{e:Mjk}
  M_{j,k} = (F_1^* F_1)_{j,k} \in \mathbb{C}[w, w^{-1}]
\end{equation}
 for $1\le j,k\le 4$ and $|w|=1.$
Let 
\begin{equation*}
G(w) \coloneqq
\begin{pmatrix}
hI - M & B \\
B^* & M  
\end{pmatrix}  = \begin{pmatrix}  F_0^* \\ F_1^* \end{pmatrix} \, \begin{pmatrix} F_0 & F_1 \end{pmatrix} \, \succeq 0.
\end{equation*}
For $|w|=1,$ 
using equation~\eqref{e:SB0S},
\[
 0\preceq  \begin{pmatrix} S^{-\ast}(w) &0\\ 0 &S^{-\ast}(w) \end{pmatrix} \, G \,  \begin{pmatrix} S^{-1}(w) &0\\ 0 &S^{-1}(w) \end{pmatrix} 
  = 
\begin{pmatrix} I - \widetilde{M} & B_0 \\
B_0^* & \widetilde{M}
\end{pmatrix} \succeq 0,
\]
where $\widetilde{M} = S^{-*} M S^{-1}.$
Lemma~\ref{l:lem:M} gives, for $|w|=1,$
\[
\widetilde{M}(w)= M_0,
\]
hence
\begin{equation}
\label{e:M=SMS}
F_1^*F_1 = M(w) = S^*(w) M_0 S(w)  = \frac12 \,   \begin{pmatrix}  h & 0 & \frac{p^3}{w q} &0 \\0&0&0&0\\ 
\frac{w \ol{p}^3}{\ol{q}}
 & 0& p\ol{p} & 0\\0&0&0& q\ol{q}\end{pmatrix}
\end{equation}
and, comparing with equation~\eqref{e:Mjk},  we have arrived at the contradiction $M_{1,3} \notin \mathbb{C}[w, w^{-1}].$
 \end{proof}

\section{An exact matricial Positivstellensatz on compact smooth real surfaces}
\label{s:main-result}
In this section we state and prove a sum-of-squares theorem for matrices over polynomials on compact smooth real surfaces. In particular, this theorem implies the matricial bivariate Fej{\'e}r–Riesz theorem; see Corollary \ref{c:mtx2FR} below.

\begin{theorem}
\label{thm:main}
Let $X$ be a nonsingular irreducible affine surface over $\R$ such that $X(\R)$ is compact, and let $R$ be the ring of real polynomial functions on $X$. If $A=A^*\in\Mat{d}{\C}\otimes_{\R}R$ is such that $A(p)\succeq0$ for all $p\in X(\R)$, then there exist $m\in\N$ and $B\in \Mat{m\times d}{\C}\otimes_{\R}R$ such that $A=B^*B$.
\end{theorem}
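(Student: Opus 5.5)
The plan is to reduce to a scalar-type statement and then run Scheiderer's local–global machinery for compact real surfaces in matrix form.

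\textbf{Reduction to real symmetric matrices.} Write $A=A_1+iA_2$ with $A_1$ real symmetric and $A_2$ real antisymmetric over $R$. Replace $A$ by the real symmetric $2d\times 2d$ matrix
\[
\widetilde A=\begin{pmatrix} A_1 & -A_2\\ A_2 & A_1\end{pmatrix}.
\]
This $\widetilde A$ is positive semidefinite on $X(\R)$ exactly when $A$ is. A factorization $\widetilde A=C^{\Tt}C$ over $R$ gives $A=B^*B$ after compressing with the isometry $v\mapsto \frac{1}{\sqrt2}(v,-iv)$. So it suffices to show that a real symmetric $A\in\Sym{d}{R}$ which is psd on $X(\R)$ is a sum of terms $C^{\Tt}C$.

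\textbf{Global-to-local reduction.} Let $\Sigma_d\subseteq\Sym{d}{R}$ denote the cone of sums of hermitian squares. Since $X(\R)$ is compact, the preordering $\sum R^2$ is archimedean. I would use the matrix Kadison--Dubois theorem of Cimprič to get that every $A$ with $A(p)\succ0$ on $X(\R)$ lies in $\Sigma_d$. Next I would establish a matrix version of Scheiderer's local–global principle. The claim is: if $A$ is psd on $X(\R)$ and, for every real point $p\in X(\R)$, $A$ is a sum of hermitian squares in the completion $\widehat R_{\mathfrak m_p}$, then $A\in\Sigma_d$. I expect Scheiderer's proof to carry over. One chooses $f\in\Sigma_1$ vanishing exactly on the finitely many real points where $A$ is singular and the local condition matters. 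One then writes $A=A'+fC$ with $A'$ handled near the zeros by the local representations, lifted modulo a high power of $f$. The remainder is then strictly positive off the zeros, so Kadison--Dubois applies. The key technical input is that $R/(f^N)$ is zero-dimensional, or of dimension $\le1$, so that local sos representations glue via the Chinese remainder theorem. Before gluing, I would also check that the singular locus $\{p:\det A(p)=0\}\cap X(\R)$ can be reduced to a finite set. Where it is a curve, one separates $A$ by a unimodular change of basis over the one-dimensional ring $R/(\det A)$.

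\textbf{Local step (the main obstacle).} At a real point $p$, the completion $\widehat R_{\mathfrak m_p}$ is $\R[[x,y]]$ because $X$ is nonsingular. One needs that every symmetric matrix over $\R[[x,y]]$ which is psd on the real germ is a sum of hermitian squares. If some diagonal entry of $A$ is a unit, symmetric Gaussian elimination over the local ring splits $A=(a)\oplus A''$ up to congruence by an invertible matrix, and one inducts on $d$. The scalar case is Scheiderer's theorem that psd elements of $\R[[x,y]]$ are sums of squares, indeed of two squares. The hard case is $A(p)=0$, or more generally $A(p)$ singular with no unit entries in the singular block. There I would first try to diagonalize over the fraction field, $P^{\Tt}AP=\operatorname{diag}(a_1,\dots,a_d)$ with $a_i$ psd. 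Then I would control the denominators using that $\R[[x,y]]$ is a two-dimensional regular UFD, where reflexive modules are free and the Pythagoras number is finite. The aim is to show that the psd form $A$ on the free module $\widehat R^d$ admits a Cholesky-type factorization $A=C^{\Tt}C$ with $C$ over $\widehat R$. The approach would be to consider the image of $A$ as a reflexive, hence free, submodule. Making this factorization work, as opposed to merely factoring over the fraction field, is the step I expect to require the most care.
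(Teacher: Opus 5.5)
Your reduction to the real symmetric case and the strictly positive definite (Kadison--Dubois) step are fine. There is, however, a genuine gap in two places.

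\textbf{The degeneracy locus cannot be made finite.} Your global-to-local step needs the degeneracy locus of $A$ on $X(\R)$ to be finite, and your proposed fix when it is a curve cannot work. For $P\in\operatorname{GL}_d(R)$ one has $\det(P^\Tt AP)=\det(P)^2\det A$, and $\det P$ is a unit, hence nowhere zero on $X(\R)$. So the real zero set of $\det A$ is a congruence invariant, and it is all of $X(\R)$ when $A$ has deficient rank. A psd $A$ can degenerate along real curves, so your completion-based, finite-zero local--global principle does not reach the general case. You would instead need a matrix local--global principle over the localizations $R_{\mathfrak m}$ at all maximal ideals, as in Scheiderer's scalar proof. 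That in turn requires a local theorem for two-dimensional regular local rings.

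\textbf{The local step lacks its key idea.} You rightly flag this step as the main obstacle, but the reflexive-hull suggestion does not resolve it. The reflexive hull $N$ of $T\widehat R^d$ is free, and the standard form is $\widehat R$-valued on it. But its Gram matrix $H$ is generally not unimodular, so $A=S^\Tt HS$ merely replaces $A$ by another form that may still degenerate at the closed point. Already for $d=1$, $A=x^2+y^2$, $T=(x,y)^\Tt$, the module $T\widehat R$ is free with Gram matrix $x^2+y^2$, while the right lattice is $\widehat R^2\supsetneq T\widehat R$.

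What is missing is the construction of a \emph{self-dual} lattice containing $TR^d$:
\begin{enumerate}
\item Factor $A=C_{\mathfrak p}^\Tt C_{\mathfrak p}$ over each DVR $R_{\mathfrak p}$ with $\htop\mathfrak p=1$, by diagonalizing and using that psd elements are sums of squares there.
\item Fix one rational factorization $A=T^\Tt T$.
\item At the finitely many primes where $T$ has denominators, use Witt's extension theorem to find $g_{\mathfrak p}\in\operatorname{O}_m(K)$ with $g_{\mathfrak p}T=C_{\mathfrak p}$.
\item Put $L=\bigcap_{\htop\mathfrak p=1}L_{\mathfrak p}$, with $L_{\mathfrak p}=g_{\mathfrak p}^{-1}R_{\mathfrak p}^m$ at those primes and $L_{\mathfrak p}=R_{\mathfrak p}^m$ elsewhere.
\end{enumerate}
Then $L\supseteq TR^d$ is reflexive, hence projective because $R$ is regular of dimension $2$, and $L^{\#}=L$. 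So the standard form on $L$ is unimodular and positive definite on every real fiber.

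This is how the paper proceeds, and it works globally. No local--global principle at maximal ideals and no local theorem over $\R[[x,y]]$ is needed. One embeds $L$ as a direct summand of a free module and applies the matricial Putinar (Scherer--Hol) theorem to the resulting everywhere positive definite matrix; this is the only use of compactness. One then restricts the resulting linear forms to $L$ and composes with $T$.
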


\begin{remark}
It suffices to prove the statement for real symmetric matrices over $R$: if $A\in\Sym{d}{R}$ and $A\succeq0$ on $X(\R)$, then $A=B^\Tt B$ for some $B\in \Mat{m\times d}{R}$.

Indeed, suppose a hermitian $\mathbf{A}\in \Mat{d}{\C}\otimes_{\R}R$ satisfies $\mathbf{A}\succeq0$ on $X(\R)$. There is a unitary matrix $U\in\Mat{2d}{\C}$ such that
$$A:=U\begin{pmatrix}
\mathbf{A}&0 \\ 0& \overline{\mathbf{A}}
\end{pmatrix}U^*\in \Sym{d}{R}.$$
By the real statement above, $A=B^{\Tt} B$ for some $B\in \Mat{m\times 2d}{R}$. Let $E=(I\ 0)\in\Mat{d\times 2d}{\R}$; then $\mathbf{A} = EU^*AUE^\Tt=(BUE^\Tt)^*(BUE^\Tt)$.

Thus, we restrict to the real symmetric case from hereon.
\end{remark}

Let $X$ be as in Theorem \ref{thm:main}; the proof of (the real symmetric version of) Theorem \ref{thm:main} below relies heavily on commutative algebra, so we start by underlining the properties of the ring $R$. Since $X\subset\C^n$ is defined over $\R$, its vanishing ideal is of the form $\C\otimes_\R J$ for an ideal $J\subset \R[x_1,\dots,x_n]$, and $R=\R[x_1,\dots,x_n]/J$. Then, the ring $R$ is a noetherian domain (since $X$ is irreducible) of dimension 2 (since $X$ is a surface). Since $X$ is nonsingular, $R$ is regular, in the sense that all its localizations at prime ideals are regular local rings. Note that since $R$ is a domain of dimension 2, it has three kinds of prime ideals: $\{0\}$, the maximal ideals (corresponding to points in $X$), and the remaining ones, corresponding to irrreducible curves in $X$, which have height one (see \cite[Section 5]{Matsumura1989} for more on height). The latter height-one primes are crucial in the proof. 
Let $K$ be the field of fractions of $R$; note that $K$ is formally real, in the sense that $-1$ is not a sum of squares in $K$.
If $\mathfrak{p}$ is a height-one prime of $R$, then $R_{\mathfrak p}$ is a regular local ring of dimension 1, and thus a discrete valuation ring by \cite[Theorem 11.2]{Matsumura1989}.
Furthermore, within $K$,
\begin{equation}\label{eq:krullintersection}
R=\bigcap_{\htop\mathfrak p=1}R_{\mathfrak p}
\end{equation}
by the localization intersection property for normal (and in particular, regular) noetherian domains \cite[Theorem 11.5]{Matsumura1989}.

Given symmetric $d\times d$ matrix $A$ over $R$ that is positive semidefinite, finding a factorization $A=B^\Tt B$ over $R$ is guided by the following steps:
\begin{enumerate}
	\item $A$ admits a factorization $T^\Tt T$ with $m\times d$ matrix over the field $K$. More generally, for each curve on $X$, one can find such a factorization that is regular along the curve. This step relies on existing 1-dimensional sum-of-squares results, and diagonalizations over discrete valuation rings.
	\item The image of $R^d$ in $K^m$ under $T$ lies in an $R$-module $L$ with two distinguished properties: $L$ is projective, and the standard symmetric bilinear form on $L$ has values in $R$. The module $L$ is obtained by patching (akin to \eqref{eq:krullintersection}) of the local modules associated with factorizations along the curves from the previous step.
	\item Since $L$ is projective, the standard form extends to a point-wise positive definite form on a free $R$-module, i.e., it is given by a positive definite matrix over $R$; then a standard matricial Positivstellensatz on compact sets implies that it is a sum of hermitian squares of linear forms. Projecting back to $L$, we obtain a decomposition of the standard form into hermitian squares of linear forms on $L$.
	\item Since the standard form on $L$ is valued in $R$, the hermitian squares of linear forms specialised to the image of $T$ are valued in $R$, and give rise to a decomposition of $A$ as a sum of hermitian squares over $R$.
\end{enumerate}

\begin{remark}
Let us compare the above sketched proof of Theorem \ref{thm:main} with the scalar case $d=1$, established in \cite[Corollary 3.6]{Scheiderer2006}. Both proofs rely heavily on positivity and sums of squares in local rings, and local-global patchings. However, \cite{Scheiderer2006} utilizes localizations at maximal ideals, while the proof below relies on localizations at height-one prime ideals; consequently, the patching procedures also differ considerably.
\end{remark}

\subsection{Rational factorizations}

We first establish rational factorizations of $A$ that are regular along given curves.

\begin{proposition}
\label{prop:localgram}
For every height-one prime $\mathfrak p\subset R$ there exist 
$m\in\N$ and $C\in \Mat{m\times d}{R_{\mathfrak p}}$ such that
$A=C^{\mathsf T}C$.
\end{proposition}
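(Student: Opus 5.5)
We diagonalize $A$ over the discrete valuation ring $R_{\mathfrak p}$ and then write each diagonal entry as a sum of squares in $R_{\mathfrak p}$, using the one-dimensional sum-of-squares theory on the curve $V(\mathfrak p)$. Write $\pi$ for a uniformizer of $R_{\mathfrak p}$, $v$ for the valuation, and $\kappa=R_{\mathfrak p}/\mathfrak pR_{\mathfrak p}$ for the residue field. Note that $\kappa$ is the function field of the irreducible curve $Z=V(\mathfrak p)\subset X$.

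\textbf{Step 1: diagonalization.} A symmetric matrix over a DVR in which $2$ is a unit can be brought to a form $U^{\Tt}AU=D$ with $U\in GL_d(R_{\mathfrak p})$ and $D$ block diagonal. The blocks are $1\times1$ blocks $a_i$, together with possibly some $2\times2$ blocks that are hyperbolic up to a power of $\pi$. The procedure is the usual one. Pick an entry of minimal valuation. If it is a diagonal entry, it is a pivot. If the minimum is attained only off the diagonal, replace $e_i$ by $e_i+e_j$ to create a diagonal entry of the same valuation, which works because $2$ is a unit. Then clear the corresponding row and column.

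In fact we can always achieve a fully diagonal form. The residue form of $\pi^{-k}A$ is positive semidefinite at real points of $Z$, and a hyperbolic block would produce negative values. The cleaner route avoids this analysis: since $A\succeq0$ on $X(\R)$, Zariski density of regular points gives $A\succeq0$ over every ordering of $K$, so the pivot step never fails. We obtain $A=U^{-\Tt}\operatorname{diag}(a_1,\dots,a_d)U^{-1}$. Since $U^{-1}$ has entries in $R_{\mathfrak p}$, it suffices to show that each $a_i$ is a sum of squares in $R_{\mathfrak p}$.

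\textbf{Step 2: each $a_i$ is a sum of squares in $R_{\mathfrak p}$.} Each $a_i$ is nonnegative on all orderings of $K$, i.e.\ it is a totally positive element of $K$. Write $a=\pi^k u$ with $u$ a unit.

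First we need $k$ even. The curve $Z$ has real points, and for real-place reasons one can pick $\pi$ that changes sign across $Z(\R)$ at a regular real point. If $k$ were odd, $a$ would take negative values near that point, which is impossible. If $Z(\R)$ is empty or finite, there is no sign constraint; in that case the residue field $\kappa$ is nonreal or has few orderings, and we handle it directly as in Step 3.

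So $a=(\pi^{k/2})^2u$, and it remains to show that the unit $u$ is a sum of squares in $R_{\mathfrak p}$. Its residue $\bar u\in\kappa$ is nonnegative at every ordering of $\kappa$. These orderings come from real points of the normalization of $Z$, and at such points $\bar u$ is a limit of values of $u$ on $X(\R)$.

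\textbf{Step 3: lift from the residue field.} $\kappa$ is a function field of transcendence degree one over $\R$. By the classical result (Witt), every totally positive element of such a field is a sum of two squares, and $\kappa$ is nonreal exactly when $-1$ is a sum of squares in it. In either case $\bar u=\sum \bar c_j^2$ with $\bar c_j\in\kappa$, and we lift the $\bar c_j$ to $c_j\in R_{\mathfrak p}$.

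Then $u-\sum c_j^2\in\pi R_{\mathfrak p}$, but the remainder need not itself be a sum of squares. To fix this, choose the representation so that $\sum\bar c_j^2=\bar u$ while $\bar c_1$ is a unit, i.e.\ $\bar c_1\neq0$; this is always possible since $\bar u\neq0$. Then
\[
u=\sum_j c_j^2+\pi r = c_1^2\Bigl(1+\tfrac{\pi r}{c_1^2}\Bigr)+\sum_{j\ge2}c_j^2 .
\]
The difficulty is that $1+\pi s$ is a square only in the completion of $R_{\mathfrak p}$, by Hensel's lemma, and not in $R_{\mathfrak p}$ itself. This is the main obstacle.

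To get around it, I would instead use the fact that $R_{\mathfrak p}$ is a localization of the regular ring $R$ at a height-one prime whose residue field has finitely many real-place conditions. I would invoke a known local-global result, the Pfister-type theorem that over the DVR $R_{\mathfrak p}$ with formally real fraction field, a unit is a sum of squares in $R_{\mathfrak p}$ if and only if it is a sum of squares in $K$ and its residue is a sum of squares in $\kappa$. The first condition holds by the Artin/Pfister theorem for $K$, and the second by Witt's theorem for $\kappa$.

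\textbf{Step 4: assembling $C$.} Collecting the squares from Steps 2 and 3 gives $\operatorname{diag}(a_i)=E^{\Tt}E$ with $E$ over $R_{\mathfrak p}$. Setting $C=EU^{-1}$ yields $A=C^{\Tt}C$ with $C\in\Mat{m\times d}{R_{\mathfrak p}}$, as required.
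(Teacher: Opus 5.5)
Your overall plan is the paper's: diagonalize over the DVR $R_{\mathfrak p}$, then show each diagonal entry is a sum of squares in $R_{\mathfrak p}$. However, Step 2 has a genuine gap.

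\textbf{The gap.} Your claim that $k=v_{\mathfrak p}(a)$ is even rests on $\pi$ changing sign across a nonsingular real point of $Z=V(\mathfrak p)$, i.e.\ on $\kappa$ being formally real. Nothing forces $Z$ to have real points, and when $Z(\R)$ is finite or empty, odd valuations genuinely occur. For example, on the sphere $X=\{x^2+y^2+z^2=1\}$ take $\mathfrak p=(z-2)$, whose curve is the conic $x^2+y^2+3=0$ with no real points, and take $d=1$, $A=(2-z)$. Then $A\succ0$ on $X(\R)$. But $z-2$ is a uniformizer, so $v_{\mathfrak p}(2-z)=1$, and every congruent entry $c^2(2-z)$ with $c\in R_{\mathfrak p}^\times$ also has odd valuation. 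Your fallback ``handle it directly as in Step 3'' does not apply here. Step 3, and the purity statement you invoke there, only treat the unit $u$ left after an even power $\pi^k$ has been split off as a square. So as written, the argument does not prove the proposition even for $d=1$.

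\textbf{How to repair your route.} If $\kappa$ is real, nothing beyond Artin is needed. Write $a=\sum f_j^2$ in $K$; then $\min_j v(f_j)\ge0$, since otherwise rescaling gives a nontrivial relation $\sum\bar g_j^2=0$ in $\kappa$. So the representation already lies in $R_{\mathfrak p}$, and parity is automatic.

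If $\kappa$ is nonreal, lift $-1=\sum\bar c_j^2$. If necessary, replace some $c_j$ with $\bar c_j\neq0$ by $c_j+\pi$, to get $1+\sum c_j^2=\pi w$ with $w\in R_{\mathfrak p}^\times$. For odd $k$, write $a=\pi^{k-1}(\pi w)(u/w)$. Here $u/w$ is a unit that is a sum of squares in $K$, hence also in $R_{\mathfrak p}$.

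That last step is exactly your ``Pfister-type theorem''. It does hold for units, but it needs a proof rather than a vague citation. A primitive isotropic vector of the unimodular form $\langle 1,\dots,1,-u/w\rangle$ splits off a hyperbolic plane, and Witt cancellation over the local ring then gives $\langle1,\dots,1\rangle\cong\langle u/w\rangle\perp q''$.

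\textbf{What the paper does instead.} It avoids this casework entirely. It either cites Scheiderer's result that psd elements of a DVR are sums of squares, or argues globally. In the global argument, write the diagonal entry as $a=x^{\mathsf T}Ax$ and take $s\in R\setminus\mathfrak p$ with $sx\in R^d$. Then $s^2a\in R$ is psd on $X(\R)$, hence a sum of squares in $R$ by the scalar theorem for compact nonsingular surfaces, and one divides by the unit $s^2$.

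\textbf{Minor point.} In Step 1, the $e_i+e_j$ trick already gives a full diagonalization because $2$ is a unit. No positivity argument and no $2\times2$ blocks are needed.
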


\begin{proof}
Since $R_{\mathfrak p}$ is a discrete valuation ring, every symmetric matrix over it is congruent (over $R_{\mathfrak p}$) to a diagonal matrix. 
Namely, there exists $P\in \operatorname{GL}_d(R_{\mathfrak p})$ such that $P^\Tt AP$ is a diagonal matrix. If $a$ is its diagonal entry, then $a=u^\Tt Au$ for some $u\in R_{\mathfrak p}^d$. 
Since $A$ is positive semidefinite on $X(\R)$, $a$ is a positive semidefinite element in $R_{\mathfrak p}$, and therefore a sum of squares in $R_{\mathfrak p}$ by \cite[Theorem 3.9]{Scheiderer2001} since $R_{\mathfrak p}$ is a discrete valuation ring (alternatively, $s^2u=v \in R^d$ for some $s\in R\setminus\mathfrak{p}$, then $s^2a=v^\Tt Av\in R$ is nonnegative on $X(\R)$, and thus a sum of squares in $R$ by \cite[Corollary 3.4]{Scheiderer2006}, so $a$ is a sum of squares in $R_{\mathfrak p}$).
Consequently, the diagonal matrix $P^\Tt AP$ factors as $B^\Tt B$ for some $B\in \Mat{m\times d}{R_{\mathfrak p}}$. Then, take $C=BP^{-1}$.
\end{proof}

By Proposition~\ref{prop:localgram} applied to any height-one prime, there exist $m\in\N$ and $T\in \Mat{m\times d}{K}$ such that $A=T^\Tt T$. 
Since $R$ is a noetherian domain, then for every nonzero $a\in R$, the height-one primes containing $a$ are precisely the minimal primes containing $a$ by Krull's principal ideal theorem \cite[Theorem 13.5]{Matsumura1989}, and there are only finitely many of them by \cite[Theorem 6.5]{Matsumura1989}. By applying this fact to the denominators of entries in $T$, we see that they belong to only finitely many height-one primes $\mathfrak p_1,\dots,\mathfrak p_s$ of $R$, which we call exceptional. For each $i=1,\dots,s$, choose a local factorization
\[
A=C_i^\Tt C_i,
\qquad C_i\in M_{m_i\times d}(R_{\mathfrak p_i})
\]
from Proposition~\ref{prop:localgram}. After adding zero rows to all these matrices, we may regard $T$ and every $C_i$ as matrices with a common number of rows, $m$. 

\subsection{Patching local modules}

Equip the vector space $K^m$ with the standard bilinear form 
$\langle u,v\rangle=u^\Tt v$
and the associated quadratic form $\langle v,v\rangle$. Since $K$ is formally real, this quadratic form is anisotropic: if $\sum_j v_j^2=0$, then all $v_j=0$. 
For each exceptional prime $\mathfrak p_i$, the assignments $Tv\mapsto C_iv$ 
define an isometry between $\operatorname{im}(T)$ and
$\operatorname{im}(C_i)$. Indeed, if $\langle Tv,Tv\rangle =v^\Tt Av=\langle C_iv,C_iv\rangle$, and $\ker T=\ker C$ because the standard form is anisotropic.  By the Witt extension theorem \cite[Theorem III.4.1]{Baeza1978}, there exists an orthogonal $g_i\in \operatorname{O}_m(K)$ such that
\begin{equation}
\label{eq:giT}
g_iT=C_i.
\end{equation}

For every height-one prime $\mathfrak p$, define an
$R_{\mathfrak p}$-module in $K^m$ by
\begin{equation}
\label{eq:localL}
L_{\mathfrak p}=\begin{cases}
 g_i^{-1}R_{\mathfrak p_i}^m,&\mathfrak p=\mathfrak p_i,\\[3pt]
 R_{\mathfrak p}^m,&\mathfrak p\notin\{\mathfrak p_1,\dots,\mathfrak p_s\}.
\end{cases}
\end{equation}
Then,
\begin{equation}
\label{eq:TinlocalL}
T R_{\mathfrak p}^d\subseteq L_{\mathfrak p} \qquad \text{for every height-one }\mathfrak p.
\end{equation}
Indeed, if $\mathfrak p$ is exceptional, this holds by \eqref{eq:giT}; if $\mathfrak p$ is not exceptional, then the denominators of entries in $T$ do not lie in $\mathfrak{p}$, and are therefore invertible in $R_{\mathfrak p}$.
Moreover, every $L_{\mathfrak p}$ is self-dual for the standard pairing:
\begin{equation}
\label{eq:selfduallocal}
L_{\mathfrak p}^{\#}
 :=\{z\in K^m:\langle z,L_{\mathfrak p}\rangle
                  \subseteq R_{\mathfrak p}\}
 =L_{\mathfrak p}.
\end{equation}
This is clear for non-exceptional primes; for an exceptional prime ${\mathfrak p}_i$, it follows from orthogonality of $g_i$.

Next, define
\begin{equation}
\label{eq:globalL}
L:=\bigcap_{\htop\mathfrak p=1}L_{\mathfrak p}\subseteq K^m.
\end{equation}
By \cite[Theorem VII.4.3]{Bourbaki1989}, $L$ is a finitely generated reflexive $R$-module whose localization at each height-one prime $\mathfrak{p}$ equals
$L_{\mathfrak p}$. Here, an $R$-module $M$ has the dual $R$-module $M^*=\operatorname{Hom}_R(M,R)$; then, $M$ is reflexive if the natural $R$-module homomorphism $M\to M^{**}$ is an isomorphism.
Since $R$ is a regular ring of dimension $2$, $L$ is a finitely generated projective $R$-module by \cite[Corollary 1.4]{Hartshorne1980}.

Observe that the standard pairing on $K^m$ restricts to an $R$-valued unimodular pairing
on $L$.  Indeed, for $w,z\in L$ we have
$\langle w,z\rangle\in R_{\mathfrak p}$ for every height-one
$\mathfrak p$ by \eqref{eq:selfduallocal}; hence
$\langle w,z\rangle\in R$ by \eqref{eq:krullintersection}. Now denote
\[
L^{\#}=\{z\in K^m:\langle z,L\rangle\subseteq R\}.
\]
By the previous observation, $L\subseteq L^{\#}$; on the other hand, localization gives 
$(L^{\#})_{\mathfrak p}=L_{\mathfrak p}^{\#}=L_{\mathfrak p}$ 
for all height-one primes, and so $L^{\#}\subseteq\bigcap_{\htop\mathfrak p=1} L_{\mathfrak p}=L$. Therefore,
\begin{equation}
\label{eq:Lselfdual}
L^{\#}=L.
\end{equation}
Finally, \eqref{eq:TinlocalL} implies
\begin{equation}
\label{eq:TglobalL}
T R^d\subseteq L.
\end{equation}

\subsection{Factorization of the standard form on the patched module}

Since $L\subseteq L^{\#}$, we have a unimodular quadratic form $h:L\to R$ on the projective module $L$ given by $h(z)=\langle z,z\rangle$. 
For $p\in X(\R)$ let $\R(p)$ denote $\R$ with the $R$-module structure $a\cdot \alpha=a(p)\alpha$ (i.e., $\R(p)$ is the residue field of $R$ at $p$). 

\begin{lemma}
\label{lem:hpositive}
For $p\in X(\R)$, the induced quadratic form 
$h_p:L\otimes_R\R(p)\to\R$ 
is positive definite.
\end{lemma}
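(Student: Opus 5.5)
The plan is to show separately that $h_p$ is nondegenerate and that it is positive semidefinite; together these give positive definiteness. Both properties will be read off from a Gram matrix of the pairing in a local basis of $L$ at $p$.

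\emph{Step 1 (local basis and Gram matrix).} Let $\mathfrak m\subset R$ be the maximal ideal of $p$. Since $L$ is finitely generated projective, $L_{\mathfrak m}$ is free over the local ring $R_{\mathfrak m}$, say of rank $r$. Clearing denominators, which are units in $R_{\mathfrak m}$, we may choose a basis $e_1,\dots,e_r$ of $L_{\mathfrak m}$ consisting of elements of $L\subseteq K^m$. Put
\[
H=(\langle e_i,e_j\rangle)_{i,j}.
\]
By \eqref{eq:Lselfdual} its entries lie in $R$. The images of the $e_i$ form a basis of $L\otimes_R\R(p)$, and in this basis $h_p$ has Gram matrix $H(p)\in\Sym{r}{\R}$.

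\emph{Step 2 (nondegeneracy).} Unimodularity, i.e.\ $L^{\#}=L$ in \eqref{eq:Lselfdual}, says that $z\mapsto\langle z,\cdot\rangle$ is an isomorphism $L\to L^*=\operatorname{Hom}_R(L,R)$. To justify this, note the map is injective because the form is anisotropic on $K^m$. For surjectivity, any $\phi\in L^*$ extends $K$-linearly to $K^m$. Since $L$ spans $K^m$, as $L_{\mathfrak p}=R_{\mathfrak p}^m$ for non-exceptional $\mathfrak p$, $\phi$ is represented by some $z\in K^m$, and then $z\in L^{\#}=L$. Localizing at $\mathfrak m$ then gives $H\in\operatorname{GL}_r(R_{\mathfrak m})$. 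Hence $\det H\notin\mathfrak m$, i.e.\ $\det H(p)\neq0$, so $h_p$ is nondegenerate.

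\emph{Step 3 (semidefiniteness).} Fix $c\in\R^r$ and set $x=\sum_i c_ie_i\in L\subseteq K^m$. Then
\[
f:=c^\Tt Hc=\langle x,x\rangle=\sum_{j=1}^m x_j^2,
\]
where $f\in R$ and each $x_j\in K$. We claim that an element of $R$ which is a sum of squares in $K$ is nonnegative on $X(\R)$.
\begin{itemize}
\item The finitely many $x_j$ are all regular on a nonempty Zariski open set $U\subseteq X$, and $f\ge0$ on $U(\R)$.
\item Because $X$ is nonsingular, every point of $X(\R)$ has a Euclidean neighborhood in $X(\R)$ that is a real $2$-manifold, namely the zero set of a regular system of parameters.
\item The proper Zariski closed set $X\setminus U$ is at most a curve, so it cannot contain a nonempty open subset of that $2$-manifold. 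Hence $U(\R)$ is Euclidean-dense in $X(\R)$.
\end{itemize}
By continuity $f(p)=c^\Tt H(p)c\ge0$. Thus $H(p)\succeq0$, and combined with Step 2, $H(p)\succ0$.

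The main obstacle is the density claim in Step 3, namely that $f(p)\ge0$ even though $p$ may lie on the polar curves of the $x_j$. This is exactly where nonsingularity of $X$ at real points is used. If needed, I would instead argue via the discrete valuation rings $R_{\mathfrak p}$: pick a real half-branch through $p$, i.e.\ an arc in $X(\R)$ along which some $x_j$ has a pole. Along this arc $f$ is a sum of squares of Laurent series that happens to be analytic. Therefore its value at $p$ is a limit of nonnegative numbers.
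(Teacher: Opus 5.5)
Your proposal is correct and follows essentially the same route as the paper: semidefiniteness comes from writing $\langle x,x\rangle$ as a sum of squares of rational functions and using that the points of $X(\R)$ where these functions are regular are dense on the nonsingular real surface, while nondegeneracy comes from the unimodularity $L^{\#}=L$ passed to the fiber at $p$. Working in a local basis with Gram matrix $H(p)$, and checking explicitly that $L^{\#}=L$ makes $z\mapsto\langle z,\cdot\rangle$ an isomorphism $L\to L^*$, are more detailed versions of steps the paper states briefly.
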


\begin{proof}
Let $z\in L$.  Viewed in $K^m$, write $z=(z_1,\dots,z_m)$ and choose a common nonzero denominator $a\in R$ such that $az_i\in R$ for all $i$.  Then
\[
a^2h(z)=\sum_{i=1}^M(az_i)^2.
\tag{7.1}
\label{eq:denomsos}
\]
At every $p\in X(\R)$ such that $a(p)\neq0$, \eqref{eq:denomsos} gives
$h(z)(p)\ge0$.  The zero set of a nonzero regular function on the real surface $X(\R)$ has
empty interior; hence the set where $a\ne0$ is dense.  By continuity,
$h(z)(p)\ge0$ for every $p\in X(\R)$. 
Since every element of the fiber $L\otimes_R\R(p)=L/\mathfrak m_pL$ has a representative in $L$, the fiber form $h_p$ is positive semidefinite.

On the other hand, \eqref{eq:Lselfdual} states that the adjoint map
$L\to L^*$ is an isomorphism.  Tensoring with the residue field at $p$ shows that $h_p$ is nondegenerate, and thus positive definite.
\end{proof}

We next convert fiber-wise positive definiteness into an algebraic Gram
factorization. The compactness of $X(\R)$ is used only in the following proposition.

\begin{proposition}
\label{prop:projectivegram}
Let $P$ be a finitely generated projective $R$-module, and let
$g:P\times P\to R$ 
be a symmetric bilinear form whose quadratic form is positive definite on
every real fiber.  Then there exist finitely many
$\ell_1,\dots,\ell_N\in P^*$ such that
\[
g(z,z)=\sum_{j=1}^N\ell_j(z)^2 \qquad \text{for all }z\in P.
\]
\end{proposition}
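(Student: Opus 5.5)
The plan is to reduce to the case of a free module and then apply a matrix Positivstellensatz on the compact set $X(\R)$.

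\emph{Reduction to the free case.} Since $P$ is finitely generated projective, there is a module $Q$ with $P\oplus Q\cong R^n$ for some $n$. Let $\pi:R^n\to P$ be the projection and $\iota:P\to R^n$ the inclusion, so that $\pi\iota=\mathrm{id}_P$. Choose any symmetric bilinear form $g'$ on $Q$ whose quadratic form is positive definite on every real fiber. The standard form $\langle \pi_Q x,\pi_Q x\rangle$ on $R^n$ restricted to $Q$ is a natural candidate, but it need not be definite on the fibers of $Q$. A cleaner choice avoids $g'$ altogether: define on $R^n$ the form
\[
G(x,y)=g(\pi x,\pi y)+\langle (1-\iota\pi)x,(1-\iota\pi)y\rangle .
\]
Then $G$ is given by a symmetric matrix $\mathcal G\in\Sym{n}{R}$. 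At a real point $p$, suppose the fiber value $G_p(x,x)$ vanishes. Positive definiteness of $g_p$ forces $\pi_p x=0$, and then $(1-\iota\pi)_p x=x$ gives $x=0$. So $\mathcal G(p)\succ0$ for every $p\in X(\R)$. Restricting to $P$ via $\iota$ gives $G(\iota z,\iota z)=g(z,z)$, because $(1-\iota\pi)\iota=0$.

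\emph{Main step.} Show that a matrix $\mathcal G\in\Sym{n}{R}$ that is strictly positive definite on the compact set $X(\R)$ is a sum $\sum_j v_jv_j^\Tt$ with $v_j\in R^n$. This is where compactness enters. I would invoke the matrix version of Schmüdgen's/Putinar's Positivstellensatz. Since $X(\R)$ is compact, the preordering $\sum R^2$ of $R$ is archimedean by Schmüdgen's theorem (there is $N$ with $N-\sum x_i^2$ a sum of squares in $R$). The matricial Archimedean Positivstellensatz (Scherer–Hol / Cimprič / Klep–Schweighofer) then states that a symmetric matrix which is positive definite on the compact set is a sum of hermitian squares $\sum_j W_j^\Tt W_j$ with $W_j\in\Mat{n}{R}$.

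The strictness of positivity is exactly what the fiber-definiteness hypothesis supplies, via the previous step. The main subtlety to check is that archimedean-ness of $\sum R^2$ holds for the coordinate ring of the compact variety, not just in $\R[x]$. This follows because $X(\R)$ is compact: $N-\sum x_i^2$ is positive on $X(\R)$ and hence a sum of squares in $R$ by Schmüdgen's theorem. Writing $\sum_j W_j^\Tt W_j$ row by row gives a family $v_k\in R^n$ with $\mathcal G=\sum_k v_kv_k^\Tt$.

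\emph{Conclusion.} Define $\ell_k(z)=v_k^\Tt\iota z$. Each $\ell_k$ is $R$-linear on $P$, so $\ell_k\in P^*$, and
\[
g(z,z)=G(\iota z,\iota z)=\sum_k (v_k^\Tt\iota z)^2=\sum_k \ell_k(z)^2 .
\]
This completes the proposal. The expected obstacle is only citing the matrix Archimedean Positivstellensatz in the ring $R$ rather than in a polynomial ring. If needed, this is handled by lifting $\mathcal G$ to $\Sym{n}{\R[x]}$ and applying the theorem with the equations of $X$ as constraints $\pm f_i\ge0$, whose contributions vanish in $R$.
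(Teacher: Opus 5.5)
Your proof is correct and follows essentially the same route as the paper: embed $P$ as a direct summand of a free module, add a fiberwise positive definite form on the complement, obtain the archimedean property from Schm\"udgen's theorem, apply the matricial Putinar Positivstellensatz of Scherer--Hol, and restrict the resulting linear functionals to $P$. The only difference is cosmetic: you take the standard form on $(1-\iota\pi)R^n$, whereas the paper uses $\sum_j\mu_j(y)^2$ for generators $\mu_j$ of $Q^*$. Incidentally, your aside that the standard form restricted to $Q$ ``need not be definite on the fibers'' is inaccurate, since your second summand is exactly that form and your own argument shows it is definite.
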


\begin{proof}
Choose a finitely generated projective module $Q$ with $P\oplus Q\cong R^r$. 
Choose generators
$\mu_1,\dots,\mu_t$ of $Q^*$, and define
\[
\gamma(y)=\sum_{j=1}^t\mu_j(y)^2.
\]
The form $\gamma$ is positive definite on every real fiber of $Q$: the
reductions of the $\mu_j$ generate the dual of each fiber, so they cannot all
vanish on a nonzero fiber vector.

\def\operp{\oplus}

Under an isomorphism $P\oplus Q\cong R^r$, the quadratic form $g\operp\gamma$ is given by a
symmetric matrix $H\in\Sym{r}{R}$, which is positive definite at every $p\in X(\R)$. Since $X(\R)$ is compact, there is $\rho\in\R$ such that $\rho^2-\sum_{i=1}^n x_i^2$ is strictly positive on $X(\R)$. Thus, $\rho^2-\sum_{i=1}^n x_i^2$ is a sum of squares in $R$ by \cite[Corollary 3]{Schmudgen1991}. This archimedean relation allows one to apply the matricial version of Putinar's Positivstellensatz \cite[Theorem 2]{Scherer2006} to $H$, showing that $H=G^\Tt G$ for some $G\in \Mat{N,r}{R}$. 
Equivalently, $g\operp\gamma$ is a finite sum
of squares of linear functionals on $P\oplus Q$. Restricting these
functionals to the direct summand $P$ gives the asserted representation of
$g$.
\end{proof}

\subsection{Final step in the proof of Theorem~\ref{thm:main}}

Let $T$ and $L$ be as constructed in the previous subsections.  By \eqref{eq:TglobalL}, we view $T$ as an $R$-linear map $R^d\to L$. The quadratic form $h(z)=\langle z,z\rangle$ on $L$ is
positive definite on every real fiber by Lemma~\ref{lem:hpositive}.
Proposition~\ref{prop:projectivegram} therefore gives
$\ell_1,\dots,\ell_N\in L^*$ such that
\[
h(z)=\sum_{j=1}^N\ell_j(z)^2.
\]
For $v\in R^d$,
\[
v^{\mathsf T}Av
    =\langle Tv,Tv\rangle
    =h(Tv)
    =\sum_{j=1}^N(\ell_j\circ T)(v)^2.
\]
Each $\ell_j\circ T$ is an element of $(R^d)^*$, and hence has the form $(\ell_j\circ T)(v)=\sum_{i=1}^d b_{ji}v_i$ for some $b_{ji}\in R$. 
If $B=(b_{ji})$, then
\[
v^{\mathsf T}Av=v^{\mathsf T}B^{\mathsf T}Bv
\qquad\text{for all }v,
\]
and therefore $A=B^{\mathsf T}B$. \hfill$\qed$

\section{Corollaries}\label{s:cors}
 This section contains three corollaries of  Theorem~\ref{thm:main}. The first the matricial bivariate Fej{\'e}r–Riesz theorem; the second 
 answers an open problem on positivity in tensor products \cite{Mehta2026} that arose in quantum information theory; and the third answers 
 a question from \cite[Section 11]{Savchuk2012} by showing $\Mat{4}{\C[S^1\times S^1]}$ admits a perfect Positivstellensatz.

\begin{corollary}\label{c:mtx2FR}
If $A\in \Mat{n}{\C[S^1\times S^1]}$ is positive semidefinite on $S^1\times S^1$, then there are $m\in\N$ and $B\in \Mat{m\times d}{\C[S^1\times S^1]}$ such that $A=B^*B$.
\end{corollary}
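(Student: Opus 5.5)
The plan is to apply Theorem~\ref{thm:main} directly, with $X$ taken to be the real algebraic torus. Concretely, let $X\subset\C^4$ be the affine variety cut out by $x_1^2+y_1^2-1$ and $x_2^2+y_2^2-1$. This is the product of two copies of the circle curve $x^2+y^2=1$. It is defined over $\R$, its real points $X(\R)=S^1\times S^1$ form a compact set, and its ring of real polynomial functions is $R=\R[x_1,y_1,x_2,y_2]/(x_1^2+y_1^2-1,\,x_2^2+y_2^2-1)$.

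I will first verify the hypotheses of Theorem~\ref{thm:main}.
\begin{itemize}
\item Each factor curve is nonsingular and irreducible over $\C$, since $x^2+y^2-1=(x+iy)(x-iy)-1$ has nonvanishing gradient on its zero set.
\item Over $\C$ each factor is isomorphic to $\C^*$ via $z=x+iy$. Hence $X\cong\C^*\times\C^*$ is a nonsingular irreducible surface.
\end{itemize}

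Next I will identify $\C[S^1\times S^1]$ with $\C\otimes_\R R$. Under the substitution $z=x_1+iy_1$, $w=x_2+iy_2$ we have $z^{-1}=\bar z=x_1-iy_1$ on the torus, and likewise for $w$. This gives $\C\otimes_\R R\cong\C[z,z^{-1},w,w^{-1}]$. The involution $*$ on $\Mat{n}{\C}\otimes_\R R$ (conjugate-transpose of the coefficient matrices, with $R$ real) corresponds to the usual adjoint of Laurent polynomial matrices evaluated on $|z|=|w|=1$. In particular, the condition $A=A^*$ matches the hermitian condition on the torus, and $A\succeq0$ on $X(\R)$ means exactly $A(z,w)\succeq0$ for $|z|=|w|=1$.

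Theorem~\ref{thm:main} then yields $B\in\Mat{m\times n}{\C}\otimes_\R R=\Mat{m\times n}{\C[S^1\times S^1]}$ with $A=B^*B$. The statement writes $d$ for the column size, but here $d=n$.

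To recover the analytic form of Theorem~\ref{thm:main:intro}, I will also note the following. If $B$ has Laurent degrees bounded by $e$ in each variable, then $z^ew^eB$ is an analytic polynomial. Moreover $(z^ew^eB)^*(z^ew^eB)=B^*B$ on the torus, because $|z^ew^e|=1$ there.

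The main point requiring care is the identification of the $*$-structures: one must make sure that the real form $R$ with complex coefficients reproduces the Laurent polynomial ring together with its involution. Nonsingularity and irreducibility are routine.
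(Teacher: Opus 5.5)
Your proposal is correct and is the argument the paper intends: the corollary is Theorem~\ref{thm:main} applied to the real torus $\{x_1^2+y_1^2=1,\ x_2^2+y_2^2=1\}$, with $\C\otimes_\R R\cong\C[z^{\pm1},w^{\pm1}]$ via $z=x_1+iy_1$, $w=x_2+iy_2$, just as you set it up (and $R$ is a domain because this Laurent ring is one). Your check that the involution and the positivity condition on $X(\R)$ become the adjoint and positivity on $|z|=|w|=1$ is the only point needing care, and you handle it correctly; your reading of the column size $d$ as $n$ is also right.
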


Next, we apply Corollary \ref{c:mtx2FR} to an open problem on positivity in tensor products \cite{Mehta2026}. Therein, the authors consider $*$-algebras of the form $\cA_{m,n}:=\C[\Z_m^{*n}\times \Z_m^{*n}]$ for $m,n\ge 2$; these are relevant in quantum information science, as they are the algebras of observables in bipartite Bell scenarios. In \cite[Corollary 1.2]{Mehta2026}, they show that unless $m=n=2$, the algebra $\cA_{m,n}$ contains a positive semidefinite element (i.e., positive semidefinite in all $*$-representations of $\cA_{m,n}$), but is not a sum of hermitian squares in $\cA_{m,n}$. In \cite[Example 6.8]{Mehta2026}, they ask whether positive semidefinite elements in $\cA_{2,2}$ are sums of hermitian squares. 

\begin{corollary} \label{c:msz}
If $a\in \cA_{2,2}$ is positive semidefinite, then $a=\sum_ib_i^*b_i$ for some $b_i \in \cA_{2,2}$.
\end{corollary}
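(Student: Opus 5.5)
The plan is to realize $\cA_{2,2}=\C[\Z_2^{*2}\times\Z_2^{*2}]$ as a matrix algebra over the coordinate ring of the torus, and then apply Corollary~\ref{c:mtx2FR}.

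First, the infinite dihedral group $D_\infty=\Z_2*\Z_2$ is generated by two involutions $s,t$. Put $u=st$; this generates an index-two copy of $\Z$. Hence $\C[D_\infty]$ is a crossed product $\C[u,u^{-1}]\rtimes\Z_2$, where $s$ acts by $u\mapsto u^{-1}$. The standard description of such an algebra is as follows: $\C[D_\infty]$ embeds as a $*$-subalgebra of $\Mat{2}{\C[S^1]}$ via
\[
s\mapsto\begin{pmatrix}0&1\\1&0\end{pmatrix},\qquad t\mapsto\begin{pmatrix}0&z\\ \ol{z}&0\end{pmatrix},\qquad |z|=1.
\]
The image is the fixed-point algebra of the involution $F(z)\mapsto JF(\ol z)J$ with $J=\begin{pmatrix}0&1\\1&0\end{pmatrix}$. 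Equivalently, the image consists of matrices over $\C[S^1]$ whose entries satisfy the symmetry relations $F_{22}(z)=F_{11}(\ol z)$ and $F_{21}(z)=F_{12}(\ol z)$. Tensoring two copies gives a $*$-embedding
\[
\iota:\cA_{2,2}\hookrightarrow\Mat{4}{\C[S^1\times S^1]},
\]
whose image is the fixed algebra of the involution $\sigma(F)(z,w)=(J\otimes J)F(\ol z,\ol w)(J\otimes J)$. Injectivity follows from checking that the images of the group elements are linearly independent, for instance by tracking the monomials $z^jw^k$ together with their positions.

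Second, I would show that if $a$ is positive semidefinite in every $*$-representation, then $\iota(a)(z,w)\succeq0$ for all $|z|=|w|=1$. Each point evaluation of $\iota$ is a $4$-dimensional $*$-representation of $\cA_{2,2}$, so this step is immediate. Corollary~\ref{c:mtx2FR} then gives $\iota(a)=B^*B$ with $B\in\Mat{m\times4}{\C[S^1\times S^1]}$.

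The main obstacle is that $B$ need not lie in the image of $\iota$, so the factorization must be pulled back. I would average over the symmetry. Since $\sigma$ is a $*$-automorphism fixing $\iota(a)$,
\[
\iota(a)=\tfrac12\bigl(B^*B+\sigma(B)^*\sigma(B)\bigr),
\]
where $\sigma$ is applied to rectangular matrices via the row-wise map $B\mapsto B(\ol z,\ol w)(J\otimes J)$. It remains to write each term $B_i^*B_i$, where $B_i$ is a row of $B$, as a sum of hermitian squares of elements of the fixed algebra. Here I would use the conditional expectation $E=\tfrac12(\mathrm{id}+\sigma)$ onto $\iota(\cA_{2,2})$ together with the fact that $\Mat{4}{\C[S^1\times S^1]}$ is generated as a module over $\iota(\cA_{2,2})$ by finitely many explicit elements. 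These elements come from the decomposition of the Laurent polynomial ring $\C[S^1\times S^1]$ over its $(z,w)\mapsto(\ol z,\ol w)$-invariant subring as a direct sum of isotypic components.

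Concretely, the first approach I would try is to find a finite set of matrix units $e_k\in\Mat{4}{\C}$ such that every $4$-row $b$ can be written as $b=\sum_k e_k^{\mathrm{row}}\,\iota(c_k)$ with $c_k\in\cA_{2,2}$. This would hold if $\iota(\cA_{2,2})$ acts on row vectors as a free or projective module, which in turn follows from the crossed-product algebra being Morita equivalent to $\C[S^1\times S^1]$ away from fixed points. Then
\[
b^*b=\sum_{k,l}\iota(c_k)^*e_k^*e_l\,\iota(c_l),
\]
and averaging under $E$ turns the inner coefficients into a positive semidefinite matrix over the invariant subring. The fixed points of $(z,w)\mapsto(\ol z,\ol w)$ are $(\pm1,\pm1)$, so I expect this step to need care at these four points. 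If the module-theoretic route stalls there, the fallback is to redo the proof of Theorem~\ref{thm:main} equivariantly. That means running the lattice patching and the matricial Putinar step of Proposition~\ref{prop:projectivegram} for the $\sigma$-twisted algebra directly, using that $\iota(\cA_{2,2})$ is itself a finite module over the invariant ring $\C[S^1\times S^1]^\sigma$, which is the coordinate ring of a compact surface.
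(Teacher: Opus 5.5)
Your embedding $\iota$ and your positivity step are fine. Point evaluations of $\iota$ are $4$-dimensional $*$-representations, which directly replaces the paper's appeal to \cite[Lemma 5.5]{Savchuk2012}. So Corollary~\ref{c:mtx2FR} does give $\iota(a)=B^*B$.

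The gap is the step you yourself call the main obstacle. You never pull the factorization back to $\cA_{2,2}$, and the tool you set up for it is wrong. The image of $\iota$ is not the fixed algebra of the single involution $\sigma$. It is the fixed algebra of the Klein four-group $\Gamma$ generated by $\sigma_1(F)(z,w)=(J\otimes I)F(\ol z,w)(J\otimes I)$ and $\sigma_2(F)(z,w)=(I\otimes J)F(z,\ol w)(I\otimes J)$. For example, the constant matrix $e_1e_1^{\mathsf T}+e_4e_4^{\mathsf T}$ is $\sigma$-fixed but not $\sigma_1$-fixed, so it is not in the image of $\iota$. Hence $\tfrac12(\mathrm{id}+\sigma)$ is not a conditional expectation onto $\iota(\cA_{2,2})$, and averaging with it does not land in $\iota(\cA_{2,2})$.

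The rest of your plan lists possibilities rather than giving an argument: isotypic components, Morita equivalence away from $(\pm1,\pm1)$, a ``positive semidefinite matrix over the invariant subring'', and an equivariant rerun of Theorem~\ref{thm:main}. Ending with a positive semidefinite matrix over an invariant ring would just be a new factorization problem of the same type.

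The missing idea is a conditional expectation onto $\cA_{2,2}$ that sends sums of hermitian squares to sums of hermitian squares. The paper imports it from \cite[Proposition 5.4]{Savchuk2012} and applies it to $B^*B$. In your coordinates it is elementary.

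Put $\mathcal E=\tfrac14\sum_{\tau\in\Gamma}\tau$. This is an $\iota(\cA_{2,2})$-bimodule map that fixes $\iota(\cA_{2,2})$, since each $\tau$ is a $*$-automorphism fixing it pointwise. As $c$ runs over $\cA_{2,2}$, the first row of $\iota(c)$ runs over all of $\C[S^1\times S^1]^{1\times 4}$. Already in one variable, the first row of $\iota(f(u)+g(u)s)$ is $(f(\ol z),g(\ol z))$. So each row of $B$ is $b_i=e_1^{\mathsf T}\iota(c_i)$ for some $c_i\in\cA_{2,2}$.

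The permutations $J^{\epsilon_1}\otimes J^{\epsilon_2}$ move $e_1$ to every standard basis vector, so $\mathcal E(e_1e_1^{\mathsf T})=\tfrac14 I_4$. Therefore
\[
\iota(a)=\mathcal E(\iota(a))=\sum_i\iota(c_i)^*\,\mathcal E(e_1e_1^{\mathsf T})\,\iota(c_i)=\iota\Bigl(\sum_i (c_i/2)^*(c_i/2)\Bigr),
\]
and injectivity of $\iota$ finishes the proof. One generator suffices and its coefficient is a scalar, so nothing special happens at the fixed points $(\pm1,\pm1)$.
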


\begin{proof}
First, observe that $\Z_2*\Z_2\cong\Z\rtimes \Z_2$ (where $\Z_2$ acts on $\Z$ by changing the sign). Then, $(\Z_2*\Z_2)\times (\Z_2*\Z_2)\cong (\Z\times \Z)\rtimes (\Z_2\times\Z_2)$, with the component-wise action $\alpha$ of $\Z_2\times\Z_2$ on $\Z\times \Z$. Consequently, $\cA_{2,2}$ is a cross product
$$\cA_{2,2}\cong 
\C[S^1\times S^1]\times_\alpha (\Z_2\times\Z_2).$$
Note that the group $\Z_2\times\Z_2$ has order 4. By \cite[Proposition 5.4]{Savchuk2012}, there is a strong conditional expectation $\vartheta:\Mat{4}{\C[S^1\times S^1]}\to \cA_{2,2}$. That is, $\cA_{2,2}$ embeds into $\Mat{4}{\C[S^1\times S^1]}$, and $\vartheta$ is a unital map of $\cA_{2,2}$-bimodules that maps preserves sums of hermitian squares. If $a\in \cA_{2,2}$ is positive semidefinite, then $a$ is also positive semidefinite in $\Mat{4}{\C[S^1\times S^1]}$ by \cite[Lemma 5.5]{Savchuk2012}. Hence, $a$ is a sum of hermitian squares in $\Mat{4}{\C[S^1\times S^1]}$ by Corollary \ref{c:mtx2FR}. By applying $\vartheta$ to this decomposition, we conclude that $a$ is a sum of hermitian squares in $\cA_{2,2}$.
\end{proof}

Finally, in \cite[Section 11]{Savchuk2012}, the authors ask whether the $*$-algebra $\Mat{n}{\C[S^2]}$ admits a perfect Positivstellensatz (i.e., positive semidefinite elements are sums of hermitian squares). They also point out that a perfect Positivstellensatz for $\Mat{2}{\C[S^2]}$ implies a perfect Positivstellensatz for the ``noncommutative sphere''
$$\C\left\langle s_1,s_2,s_3\mid 
s_i^*=s_i \text{ for }i=1,2,3,\ 
s_1^2+s_2^2+s_2^3=1,\ 
s_is_j=-s_js_i \text{ for }i\neq j
\right\rangle.$$
Theorem \ref{thm:main} applies to $X=\{x_1^2+x_2^2+x_3^2=1\}$, whose real points form the sphere $S^2$, and we obtain the following.

\begin{corollary}\label{c:mtxsphere}
If $A\in \Mat{n}{\C[S^2]}$ is positive semidefinite on $S^2$, then $A$ is a sum of hermitian squares in $\Mat{n}{\C[S^2]}$.
\end{corollary}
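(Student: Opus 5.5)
The plan is to apply Theorem~\ref{thm:main} to the real sphere. Take $X\subset\C^3$ to be the affine surface cut out by $x_1^2+x_2^2+x_3^2-1$. Three hypotheses must be checked.

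\emph{Irreducibility.} The polynomial $x_1^2+x_2^2+x_3^2-1$ is irreducible over $\C$: a nondegenerate quadratic form in three variables minus a nonzero constant cannot factor into two linear polynomials. So $X$ is an irreducible surface.

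\emph{Nonsingularity.} The gradient $(2x_1,2x_2,2x_3)$ vanishes only at the origin, which does not lie on $X$. Hence $X$ is nonsingular.

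\emph{Compactness.} $X(\R)=S^2$ is compact.

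Next, identify the coordinate ring. The ring of real polynomial functions on $X$ is $R=\R[x_1,x_2,x_3]/(x_1^2+x_2^2+x_3^2-1)$. Because the defining polynomial is irreducible with real points of full dimension, it generates the real vanishing ideal, so $R$ is exactly the ring of polynomial functions on $S^2$. Therefore $\C\otimes_\R R=\C[S^2]$ and
\[
\Mat{n}{\C[S^2]}=\Mat{n}{\C}\otimes_\R R,
\]
where the involution is conjugate transpose with the coordinates $x_i$ treated as real (self-adjoint).

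Now let $A=A^*\in\Mat{n}{\C[S^2]}$ be positive semidefinite on $S^2$. Theorem~\ref{thm:main}, with $d=n$, gives some $m\in\N$ and $B\in\Mat{m\times n}{\C}\otimes_\R R$ with $A=B^*B$. Writing $b_i\in\Mat{1\times n}{\C[S^2]}$ for the rows of $B$, we get $A=\sum_{i=1}^m b_i^*b_i$. If sums of hermitian squares are required to use square matrices, replace each row $b_i$ by the $n\times n$ matrix $\tilde b_i$ whose first row is $b_i$ and whose other rows are zero. Then $\tilde b_i^*\tilde b_i=b_i^*b_i$, so $A=\sum_i\tilde b_i^*\tilde b_i$ is a sum of hermitian squares in $\Mat{n}{\C[S^2]}$.

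The only step that needs any care is the identification of $\Mat{n}{\C[S^2]}$ with $\Mat{n}{\C}\otimes_\R R$, including the involution. It is routine, so the corollary follows directly from Theorem~\ref{thm:main}.
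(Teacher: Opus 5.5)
Your proposal is correct and takes the same route as the paper. The paper also derives the corollary by applying Theorem~\ref{thm:main} directly to $X=\{x_1^2+x_2^2+x_3^2=1\}$ with $X(\R)=S^2$; you just spell out what it leaves implicit: irreducibility, nonsingularity, compactness, the identification of $\Mat{n}{\C}\otimes_\R R$ with $\Mat{n}{\C[S^2]}$, and the rewriting of $B^*B$ as a sum of hermitian squares.
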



\end{document}